\documentclass[arxiv]{default}

\title{Exponential sums over finite fields and the large sieve}

\author{Corentin Perret-Gentil}
\address{Centre de recherches mathématiques, Montréal, Canada}
\email{corentin.perretgentil@gmail.com}

\newcommand{\FM}{\operatorname{FM}}

\date{March 2018. Revised August 2018.}

\setcounter{tocdepth}{1}

\begin{document}

\begin{abstract}
  By using a variant of the large sieve for Frobenius in compatible systems developed in \cite{KowLS06} and \cite{KowLargeSieve08}, we obtain zero-density estimates for arguments of $\ell$-adic trace functions over finite fields with values in some algebraic subsets of the cyclotomic integers, when the monodromy groups are known. This applies in particular to hyper-Kloosterman sums and general exponential sums considered by Katz.
\end{abstract}

\maketitle

\tableofcontents

\section{Introduction}

\subsection{Exponential sums and trace functions}\label{subsec:expSumsCycl}

We consider exponential sums over a finite field $\F_q$ of characteristic $p\ge 5$ such as:
\begin{enumerate}
\item\label{ex:Kln} Hyper-Kloosterman sums of rank $n\ge 2$ given by
\begin{equation}
    \label{eq:Kln}
    \Kl_{n,q}(a)=\frac{(-1)^{n-1}}{q^{(n-1)/2}}\sum_{\substack{x_1,\dots,x_n\in\F_q^\times\\x_1\cdots x_n=a}} e \left(\frac{\tr(x_1+\dots+x_n)}{p}\right),
  \end{equation}
  for $a\in\F_q^\times$, $\tr:\F_q\to\F_p$ the trace map, and $e(z)=\exp(2\pi i z)$ for any $z\in\C$. More generally, we also have hypergeometric sums as introduced in \cite[Chapter 8]{KatzESDE};
\item\label{ex:GenExpSums} General exponential sums of the form
  \begin{equation}
    \label{eq:expsumGen}
    \frac{-1}{q^{1/2}}\sum_{\substack{y\in\F_q\\ f(y),\, g(y),\, h(y)\neq\infty}} e\left(\frac{\tr(xf(y)+h(y))}{p}\right)\chi(g(y)),
  \end{equation}
  for $x\in\F_q$, $f,g,h\in\Q(X)$ rational functions and $\chi$ a character of $\F_q^\times$. For example, we have Birch sums $q^{-1/2}\sum_{y\in\F_q} e \left(\tr(xy+y^3)/p\right)$, cubic exponential sums studied in particular by Livné \cite{Liv87} and Katz;
\item\label{ex:familiesHE} Functions counting points on families of curves such as
  \begin{equation}
    \label{eq:familiesHE}
    \frac{q+1-|X_z(\F_q)|}{q^{1/2}} \hspace{0.2cm} (z\in\F_q, \ f(z)\neq 0),
  \end{equation}  
  where $X_z$ is the smooth projective model of the affine hyperelliptic curve $y^2=f(x)(x-z)$ over $\F_q$, for $f\in\Z[X]$ fixed squarefree of degree $2g\ge 2$.
\end{enumerate}

\subsubsection{Exponential sums as algebraic integers}\label{subsubsec:expalgint} Note that the three examples above all take values in the localization $\Z[\zeta_{4p}]_{q^{1/2}}$ (by the evaluation of quadratic Gauss sums), or less precisely in the cyclotomic field $\Q(\zeta_{4p})$.

It is an interesting question to investigate their properties as elements of these sets, as done by Fisher \cite{Fisher92,Fisher95} or recently by the author \cite{PGDistrTFCycl16} for the distribution of their reductions modulo a prime ideal and short sums thereof.

\subsubsection{Trace functions}

Examples \eqref{eq:Kln}--\eqref{eq:familiesHE} are specific incarnations of \textit{trace functions $t:\F_q\to\C$ arising from constructible middle-extension sheaves of $\overline\Q_\ell$-modules on $\P^1/\F_q$}, for $\ell$ a prime distinct from $p$, as constructed in particular by Deligne \cite{DelEC} and Katz \cite{KatzESDE}.

Very powerful tools are then available to study various aspects of these functions, such as Deligne's extension \cite{Del2} of the Riemann hypothesis for varieties over finite fields to weights of étale cohomology groups of such sheaves.

For example, Katz \cite{KatzGKM} obtained a ``vertical Sato--Tate law'' for the distribution of Kloosterman sums, through a general equidistribution theorem of Deligne, and similar results \cite{KatzESDE} for families of the type \eqref{eq:expsumGen} or \eqref{eq:familiesHE}.

\subsection{Zero-density estimates}\label{subsec:zerodensity}
The goal of the present article is to obtain general zero-density estimates of the form
\begin{equation}
  \label{eq:density}
  P \big(t(x)\in A\big):=\frac{|\{x\in\F_q : t(x)\in A\}|}{q}=o(1) \hspace{1cm} (q\to+\infty)
\end{equation}
where:
\begin{itemize}
\item $t: \F_q\to E$ is the trace function associated to a \textit{coherent family} of sheaves over $\F_q$ (Definition \ref{def:coherentFamily}), for $E$ a number field.
\item $A\subset E$ is an ``algebraic'' subset such as the set of $m$-powers ($m\ge 2$), the image of a polynomial, or more generally a set defined by a first-order formula in the language of rings.
\end{itemize}

This will apply in particular, with $E=\Q(\zeta_{4p})$, to Kloosterman sums \eqref{eq:Kln} and exponential sums of the form \eqref{eq:expsumGen}.

\subsubsection{Families of curves}

The large sieve for Frobenius in compatible systems was developed by Kowalski in \cite{KowLS06} and \cite{KowLargeSieve08} to obtain results of the type of Chavdarov \cite{Chav97} on zeta functions of families of curves, such as the probability that the numerator has Galois group as large as possible.

In the notations of Example \ref{ex:familiesHE} above, Kowalski gets for example (see \cite[Section 8.8]{KowLargeSieve08}) that
\begin{eqnarray*}
  P\Big(f(z)\neq 0,\ |X_z(\F_q)|\in\N^{\times 2}\Big)&:=&\frac{|\{z\in\F_q : f(z)\neq 0, \ |X_z(\F_q)|\in\N^{\times 2}\}|}{q}\\
&\ll&gq^{1-(4g^2+2g+4)^{-1}}\log{q},
\end{eqnarray*}
for $\N^{\times 2}$ the set of squares of integers.

The large sieve bound ultimately relies on estimates of exponential sums obtained through Deligne's generalization of the Riemann hypothesis over finite fields.

Note that in the setting above, we have $E=\Q$.

\subsubsection{Examples of results for Kloosterman sums}\label{subsubsec:Kln}
In the case of hyper-Kloosterman sums \eqref{eq:Kln} of rank $n\ge 2$, our main results are the following:
  
\begin{proposition}\label{prop:LSKlPower}
  Let $n\ge 2$ be an integer and $\varepsilon>0$. For $m\ge 2$ coprime to $p$, we have
  \[P\Big(\Kl_{n,q}(x)\in \Q(\zeta_{4p})^m\Big)\ll_{n, m,\varepsilon}\frac{p^\varepsilon\log{q}}{B_nq^{1/(2B_n)}}\to 0\]
  when $q=p^e\to+\infty$ is coprime to $n$ with $e\ge 16B_n$, where
  \begin{equation}
    \label{eq:Bn}
    B_n=
    \begin{cases}
      \frac{2n^2+n-1}{2}&:n\text{ odd}\\
      \frac{2n^2+3n+4}{4}&:n\text{ even},
    \end{cases}
  \end{equation}
and $\Q(\zeta_{4p})^m$ is the set of $m$th powers in $\Q(\zeta_{4p})$. The implied constant depends only on $n$, $m$ and $\varepsilon$.
\end{proposition}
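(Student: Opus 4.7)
The plan is to apply the variant of the large sieve for Frobenius in compatible systems (as developed in this paper) to the Kloosterman sheaf. By the work of Deligne and Katz, $\Kl_{n,q}(\cdot)$ is the trace function of a coherent family $(\mathcal{K}_{n,\lambda})_\lambda$ of middle-extension sheaves of $\Oc_\lambda$-modules on $\P^1/\F_p$, lisse on $\mathbb{G}_m$. By \cite{PGIntMonKS16}, for $\ell$ in a density-one set of primes (bounded below in terms of $n$ only, independently of $p$), the integral arithmetic and geometric monodromy groups over $\F_\lambda$ coincide with the expected classical group: $\mathrm{Sp}_n(\F_\lambda)$ for even $n$ and (a twist of) $\mathrm{SL}_n(\F_\lambda)$ for odd $n$.

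The sieving observation is that if $\Kl_{n,q}(x)\in\Q(\zeta_{4p})^m$, then for every good $\lambda$ (not lying above $p$ or $m$) the reduction $\bar t_\lambda(x)$ lies in $A_\lambda:=\F_\lambda^m\cup\{0\}$, whose density in $\F_\lambda$ is about $1/\gcd(m,\ell-1)$. Feeding this into the large sieve reduces the proof to a Gaussian sum estimate of the form
\[
  \Big|\sum_{g\in G(\F_\lambda)} \mathbf{1}_{A_\lambda}(\mathrm{tr}\,\rho(g))\,\pi(g)\Big| \ll |G(\F_\lambda)|\,\ell^{-\alpha}
\]
for every nontrivial irreducible representation $\pi$ of $G(\F_\lambda)$, with $\rho$ the standard representation. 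Expanding $\mathbf{1}_{A_\lambda}$ in multiplicative characters splits this into a main term bounded via Deligne's generalization of the Riemann hypothesis together with Katz's bounds on sums of Betti numbers, and error terms bounded via Bourgain's estimates on character sums over multiplicative subgroups.

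Choosing a sieving range $L\le\ell\le 2L$ with $L$ a suitable small power of $q$ and optimizing, one obtains the stated bound $\ll q^{-1/(2B_n)}\log q$, where the exponent $B_n$ packages $\dim G$ with the combinatorial loss from the sparsity of $A_\lambda$ and the representation theory of $G(\F_\lambda)$, explaining the parity-dependent formula \eqref{eq:Bn}. The $p^\varepsilon$ factor absorbs the at most $2(p-1)$ valuations $\lambda\mid\ell$ together with an $\varepsilon$-loss from Maynard's uniform Chebotarev theorem \cite{May13}; the hypothesis $e\ge 16B_n$ ensures that the chosen $L$ lies in the range where the nontrivial saving dominates the trivial main term.

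The principal difficulty is establishing the Gaussian sum bound with constants uniform in $\ell,\lambda$ and independent of $p$. This uniformity is precisely what necessitates the strong integral monodromy input from \cite{PGIntMonKS16}: it lets one realize the Frobenius conjugacy classes as uniformly distributed classes in a fixed family of classical matrix groups $G(\F_\lambda)$, so that the Bourgain--Deligne--Katz ingredients combine without producing a hidden $p$-dependence.
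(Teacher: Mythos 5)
Your overall architecture is the one the paper follows (the coherent Kloosterman family of Proposition \ref{prop:Klc} with integral monodromy from \cite{PGIntMonKS16}, the large sieve of Theorem \ref{thm:largeSieve}, and Maynard's theorem for uniformity in $p$), but the central analytic step is mis-specified, and this is a genuine gap. The large sieve as used in Proposition \ref{prop:largeSieve} does \emph{not} reduce the problem to bounding $\sum_{g\in G(\F_\lambda)}\mathbf{1}_{A_\lambda}(\tr g)\,\pi(g)$ for every nontrivial irreducible representation $\pi$: all the representation-theoretic and Deligne--Katz input is already packaged into the sieve exponent $B=1+\dim G+\rank(G)/2$ (a purely group-theoretic quantity; contrary to your description, $B_n$ has nothing to do with the sparsity of $A_\lambda$). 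What remains to be supplied is only a lower bound on $P(L)=\sum_{\lambda\in\Lambda_L}|\Omega_\lambda|/|G(\F_\lambda)|$, i.e.\ an upper bound strictly less than $1$ on the proportion of $g\in G(\F_\lambda)$ with $\tr(g)\in A_\lambda$. The paper obtains this from additive-character orthogonality (Proposition \ref{prop:probTrG}) combined with Kim's explicit evaluations of Gaussian sums over $\SL_n(\F_\lambda)$ and $\Sp_n(\F_\lambda)$ (Proposition \ref{prop:boundGaussianSumClassicalKim}), which give $\alpha(G)\ge 1$ so that the \emph{trivial} bound $|\sum_{x\in A_\lambda}\psi(x)|\le|A_\lambda|$ suffices; no Bourgain-type cancellation over multiplicative subgroups is needed anywhere, and the character-sum estimate you reduce to is neither proved in your sketch nor implied by Deligne plus Katz's Betti-number bounds as stated. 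As written, you have neither established the estimate you claim to need nor shown how it would produce the required density bound.

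A second point that must be made precise is the choice of the sieve support. For a generic $\lambda$ one can have $\gcd(m,|\F_\lambda^\times|)=1$, in which case $\F_\lambda^{\times m}=\F_\lambda^\times$ and the local condition is vacuous; your phrase ``density about $1/\gcd(m,\ell-1)$'' hides this. One must sieve only over $\Lambda=\{\qf : N(\qf)\equiv 1\pmod m,\ \qf\nmid p\}$, as in Proposition \ref{prop:powersFiniteIndex}, so that $|A_\lambda|/|\F_\lambda|\le 1/m+1/|\F_\lambda|<1$ uniformly. It is precisely this congruence condition, on top of $E=\Q(\zeta_{4p})$, that turns the count of $|\Lambda_L|$ into a count of primes in arithmetic progressions modulo $4pm$ (Proposition \ref{prop:chebotarevCyclotomic}); Maynard's Linnik-type bound \cite{May13} is applicable only in the range $L=\lfloor q^{1/(2B_n)}\rfloor\ge(4pm)^{8}$, and this range restriction --- not a comparison of ``nontrivial saving'' against a ``trivial main term'' --- is the actual source of both the hypothesis $e\ge 16B_n$ and the factor $p^\varepsilon$ (which comes from the $d^\varepsilon$ in the uniform lower bound for $\pi(a,dm,L)$, not from the number of valuations above a given $\ell$). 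With the sieve support and the density bound set up this way, the conclusion follows exactly as in Corollary \ref{cor:largeSieveExplicitQzeta4p}.
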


More generally:
\begin{proposition}\label{prop:LSKlf}
  Let $n\ge 2$ be an integer and $\varepsilon>0$. For
  \begin{itemize}
  \item almost all\footnote{Throughout, this will mean ``for all but $o(h^d)$ such polynomials of height at most $h$, as $h\to+\infty$''.\label{fn:almostallpol}} monic polynomials $f\in\Z[X]$ of fixed degree $d\ge 2$, and
  \item all $f\in\Z[X]$ of degree $d\ge 2$ such that the Galois group of $f(X)-y\in\C(y)[X]$ is equal to $\Sf_d$,
  \end{itemize}
  we have
  \[P\Big(\Kl_{n,q}(x)\in f(\Q(\zeta_{4p}))\Big)\ll_{n, f,\varepsilon} \frac{p^\varepsilon\log{q}}{B_nq^{1/(2B_n)}}\to 0\]
  when $q=p^e\to+\infty$ is coprime to $n$ with $e\ge 16B_n$, for $B_n$ as in \eqref{eq:Bn}. The implied constant depends only on $n$, $f$ and $\varepsilon$.
\end{proposition}

\begin{remarks}\label{rem:exampleKl}~
  \begin{enumerate}
  \item The bounds are uniform in $p$, thanks to the determination of the finite monodromy groups in \cite{PGIntMonKS16}, over a field of characteristic $\ell\gg_n 1$.
  \item This can further be extended to definable subsets of $\Q(\zeta_{4p})$ (i.e. defined by a first-order formula in the language of rings), under some technical conditions (Proposition \ref{prop:definableSubsetsKl} later on).
  \item The same bounds hold for unnormalized Kloosterman sums.
  \item Under the general Riemann hypothesis (GRH) for the Dedekind zeta function of $\Q(\zeta_{4p})$, one may take $\varepsilon=0$ and $e\ge 4B_n+1$.
  \item By relying on the determination of the monodromy groups over $\overline\Q_\ell$ by Katz and the results of Larsen--Pink (see Section \ref{subsec:exploitMonodromyC}), instead of \cite{PGIntMonKS16}, these results would only hold when $p$ is fixed and $e\to+\infty$, with an implied constant depending on $p$.
  \end{enumerate}
\end{remarks}

\subsection{Strategy}

The general idea to obtain zero-density estimates of the type \eqref{eq:density} is the following: in the setting of Section \ref{subsec:zerodensity}, let $\Oc$ be the ring of integers of $E$. It turns out (by definition of a coherent family) that there exists a set $\Lambda$ of valuations of $\Oc$ (equivalently, of prime ideals) such for every $\lambda\in\Lambda$, the function $t:\F_q\to E$ coincides with the trace function $t_\lambda:\F_q\to \Oc_\lambda$ arising from a constructible middle-extension sheaf of $\Oc_\lambda$-modules on $\P^1/\F_p$. By reduction, we obtain a trace function $\tilde t_\lambda:\F_q\to\F_\lambda$, where $\F_\lambda$ is the residue field at $\lambda$.

Thus,
\[P \big(t(x)\in A\big)\le \frac{|\{x\in\F_q : \tilde t_\lambda(x)\in A_\lambda \ \forall\lambda\in\Lambda\}|}{q},\]
where $A_\lambda=(A\cap\Oc_\lambda)\pmod{\lambda}\subset\F_\lambda$. A variant of Kowalski's large sieve for Frobenius in compatible systems, handling sheaves of $\Oc_\lambda$-modules instead of sheaves of $\Z_\ell$-modules, can then be used to bound this quantity in terms of local densities in the sets $A_\lambda$.

\subsubsection{Technical tools}

More precisely, the first part of the approach requires:
\begin{itemize}
\item The construction by Deligne and Katz of examples of the form \ref{ex:Kln} and \ref{ex:GenExpSums} as trace functions of sheaves of $\Oc_\lambda$-modules.
\item Information on monodromy groups:
\begin{itemize}
\item When available, the determination of integral monodromy groups for a density one subset of the valuations, not depending on $p$.
\item Otherwise, results of Larsen and Pink \cite{LarsPink92,Lars95} to handle sheaves whose monodromy groups are known over $\overline\Q_\ell$ (e.g. by the works of Katz \cite{KatzGKM,KatzESDE}), but not over $\F_\lambda$.
\item For sheaves associated with exponential sums of the form \ref{ex:GenExpSums}, conditions and/or normalizations so that arithmetic and geometric monodromy groups coincide.
\end{itemize}
\end{itemize}

To compute local densities in the sets $A_\lambda$, we will need bounds on ``Gaussian sums'' (see Section \ref{sec:tracesrandommatrices}) over:
\begin{itemize}
  \item Linear algebraic groups over $\F_\lambda$; these follow either from Deligne's generalization of the Riemann hypothesis over finite fields \cite{Del2} and bounds of Katz on sums of Betti numbers \cite{KatzBetti01}, or from explicit computations of D.S. Kim for certain finite groups of Lie type.
  \item Subsets of $\F_\lambda$ such as powers (Bourgain and others, e.g. \cite{BouCh06}) or more generally definable subsets (Kowalski \cite{Kow07}, using the work of Chatzidakis--van der Dries--Macintyre \cite{CDM92}).
\end{itemize}

The implied constant in a bound of the form \eqref{eq:density} will depend on $p$ (forcing to fix $p$ and take $q=p^e$, $e\to+\infty$) when we rely on the results of Larsen--Pink, and will be absolute when more precise information about integral monodromy groups is available.\\

When we want results with absolute implied constants, we will also employ uniform estimates in Chebotarev's density theorem (e.g. \cite{May13}), since $E$ may depend on $p$.

\subsection{Organization of the paper}
In Section \ref{sec:largesieve}, we lay out the technical setup of trace functions of sheaves of $\Oc_\lambda$-modules over finite fields, define coherent families, and show that \eqref{eq:Kln} and \eqref{eq:familiesHE} arise from such families. Finally, we state a variant of the large sieve for Frobenius in compatible systems (Theorem \ref{thm:largeSieve}).

In Section \ref{sec:tracesrandommatrices}, we get results on the Gaussian sums mentioned above, which will be used to compute the local densities in the sieve.

In Section \ref{sec:zerodensity}, we apply the large sieve of Section \ref{sec:largesieve} to obtain bounds of the type \eqref{eq:density}, by using the estimates from Section \ref{sec:tracesrandommatrices} and uniform bounds in Chebotarev's density theorem.

In Section \ref{sec:examples}, we start by explaining how this leads to the results for Kloosterman sums given in Section \ref{subsubsec:Kln} above. Then, we work towards obtaining similar zero-density estimates for general exponential sums of the form \eqref{eq:expsumGen}, showing that coherent families can still be obtained through the results of Larsen and Pink (in particular with Theorem \ref{thm:monodromyLP}).

\begin{acknowledgements}
  The author would like to thank Emmanuel Kowalski and Richard Pink for helpful discussions, as well as the anonymous referees for very valuable comments. This work was partially supported by DFG-SNF lead agency program grant 200021L\_153647 and by the National Science Foundation under Grant No. 1440140, while the author was in residence at the Mathematical Sciences Research Institute in Berkeley, California, during the Spring semester of 2017. Some of the results also appeared in the author's PhD thesis.
\end{acknowledgements}

\section{The large sieve for Frobenius in compatible systems}\label{sec:largesieve}

We start by recalling the technical setup of trace functions over finite fields, before stating a version of the large sieve for Frobenius adapted to our needs.

Throughout this section, a number field $E$ with ring of integers $\Oc$ is fixed, as well as a finite field $\F_q$ of characteristic $p$.

\subsection{Trace functions over finite fields}

\subsubsection{Definitions}

Let $\lambda$ be an $\ell$-adic valuation corresponding to a prime ideal $\lf$ of $\Oc$, $E_\lambda$ and $\Oc_\lambda$ the completions, and $\F_\lambda\cong\Oc/\lf$ the residue field.

Let $A=\overline\Q_\ell$, $\Oc_\lambda$ or $\F_\lambda$. We recall that a constructible middle-extension sheaf of $A$-modules over $\P^1/\F_p$ (or \textit{sheaf of $A$-modules over $\F_p$} for simplicity) corresponds to a continuous $\ell$-adic Galois representation
\[\rho_\Fc: \pi_{1,p}:=\Gal\left(\F_p(T)^\sep/\F_p(T)\right)\to\GL(\Fc_{\overline\eta})\cong\GL_n(A),\]
for $\overline\eta$ a geometric generic point and $\F_p(T)^\sep$ the corresponding separable closure. The associated \textit{trace functions} are, for every finite extension $\F_q/\F_p$,
  \begin{eqnarray*}
    t_{\Fc}=t_{\Fc,q}: \F_{q}&\to&A\\
    x&\mapsto&\tr \left(\rho_\Fc(\Frob_{x,q}) \mid \Fc_{\overline\eta}^{I_x}\right),
  \end{eqnarray*}
  where\footnote{The set of conjugacy classes of a group $G$ will be denoted by $G^\sharp$.} $\Frob_{x,q}\in (D_x/I_x)^\sharp\cong\Gal(\overline\F_q/\F_{q})$ is the geometric Frobenius at $x\in\F_{q}$, for $I_x\normal D_x\le\pi_{1,p}$ the inertia (resp. decomposition) group at $x$. We will denote by $U_\Fc\subset\P^1$ the maximal open of lissité of $\Fc$.
  
We refer the reader to \cite[Chapter 2]{KatzGKM} for more details and references.

\subsubsection{Monodromy groups}\label{subsec:mono}

If $\Fc$ is a sheaf of $A$-modules over $\F_p$ as above, the \textit{arithmetic and geometric monodromy groups} of $\Fc$ are the groups
\[G_\geom(\Fc)=\rho_\Fc\big(\pi_{1,p}^\geom\big)\le G_\arith(\Fc)=\rho_\Fc(\pi_{1,p})\le\GL_n(A)\]
if $A$ is discrete, and
\[G_\geom(\Fc)=\overline{\rho_\Fc\big(\pi_{1,p}^\geom\big)}\le G_\arith(\Fc)=\overline{\rho_\Fc(\pi_{1,p})}\le\GL_n(\overline\Q_\ell)\]
if $A=\overline\Q_\ell$, where $\overline{\cdot}$ denotes Zariski closure, for $\pi_{1,p}^\geom:=\Gal(\F_p(T)^\sep/\overline\F_p(T))$.\\

The works of Katz (see e.g. \cite{KatzGKM,KatzESDE,KatzSarnak91}) contain the determination of the monodromy groups over $\overline\Q_\ell$ of many sheaves of interest, such as Kloosterman sheaves. An important input is the fact that, for pointwise pure of weight $0$ sheaves, the connected component of the geometric monodromy group is a semisimple algebraic group by a result of Deligne.

The determination of discrete monodromy groups is usually more difficult, since they have far less structure.

\subsection{Coherent families}

  \begin{definition}\label{def:coherentFamily}
    Let $\Lambda$ be a set of valuations on $\Oc$ and let $U\subset\P^1/\F_p$ be an open affine subset. A family $(\Fc_\lambda)_{\lambda\in\Lambda}$, where $\Fc_\lambda$ is a sheaf of $\Oc_\lambda$-modules over $\F_p$ with maximal open of lissité $U$, is \textit{coherent} if:
  \begin{enumerate}
  \item\label{item:def:coherentFamily1} It forms a \textit{compatible system}: if $\rho_\lambda: \pi_{1,p}\to\GL_n(\Oc_\lambda)$ is the representation corresponding to $\Fc_\lambda$, then for every $\lambda\in\Lambda$, every finite extension $\F_{q}/\F_p$ and every $x\in U(\F_{q})$, the characteristic polynomial
    \[\charpol\rho_\lambda(\Frob_{x,q})\in\Oc_\lambda[T]\]
    lies in $E[T]$ and does not depend on $\lambda$.
  \item\label{item:coherentFamily2} There exists $G\in\{\SL_m,\Sp_{2m}\}$ such that for every $\lambda\in\Lambda$ corresponding to a prime ideal $\lf\normal\Oc$, the arithmetic and geometric monodromy groups of $\widetilde\Fc_\lambda:=\Fc_\lambda\pmod{\lf}$ coincide and are conjugate to $G(\F_\lambda)$. We call $G$ the \emph{monodromy group structure} of the family.    
  \end{enumerate}
  The \emph{conductor} of the family is defined to be $\sup_{\lambda\in\Lambda} \cond(\widetilde\Fc_\lambda)$, where 
    \[\cond(\widetilde\Fc_\lambda)=n+|\Sing(\widetilde\Fc_\lambda)|+\sum_{x\in\Sing(\widetilde\Fc_\lambda)} \Swan_x(\widetilde\Fc_\lambda) \hspace{0.2cm} (\lambda\in\Lambda)\]
    is the conductor defined by Fouvry--Kowalski--Michel (see e.g. \cite{AlgebraicTwists}).
  \end{definition}

  \begin{remark}
    Here, the prime $p$ is fixed, and the bounds of type \eqref{eq:density} would concern the trace functions on $\F_q$ obtained for every power $q$ of $p$. However, it may also make sense to vary $p$ (e.g. for Kloosterman sums of fixed rank, exponential sums \eqref{eq:expsumGen} coming from the reduction of integer polynomials, etc.), and the conductor will allow to control this dependency. See also Remark \ref{rem:exampleKl}.
  \end{remark}

  If $(\Fc_\lambda)_{\lambda\in\Lambda}$ is a compatible system as above, then in particular the trace function $t=t_{\Fc_\lambda}: \F_q\to\Oc_\lambda$ (as the opposite of the coefficient of order $n-1$ in the characteristic polynomial) is independent from $\lambda$ and takes values in $E$. More precisely,
\begin{equation}
  \label{eq:codomaintLocalization}
  t(\F_q)\subset\bigcap_{\lambda\in\Lambda} \Oc_\lambda\cap E=\bigcap_{\lf\in\Lambda} \Oc_\lf=\left(\Spec(\Oc)-\Lambda\right)^{-1}\Oc\subset E,
\end{equation}
where $\Oc_\lf$ is the localization at the ideal $\lf$ corresponding to the valuation $\lambda$.


\subsubsection{Fourier transforms and coherent families}

The sheaves we will consider arise by $\ell$-adic Fourier transforms, as developed by Deligne, Laumon and others (see \cite[Section 7.3]{KatzESDE}, \cite[Chapter 5]{KatzGKM}), corresponding to the discrete Fourier transform on the level of trace functions.

This often results in sheaves with large classical monodromy groups, which is part of Condition \ref{item:coherentFamily2} above.

Concerning Condition \ref{item:def:coherentFamily1} and the conductor, we recall:
\begin{lemma}\label{lemma:compSysFT}
  Let us assume that $\Q(\zeta_{4p})\le E$ and let $\psi:\F_p\to\C$ be a nontrivial additive character. If $(\Fc_\lambda)_{\lambda\in\Lambda}$ is a compatible system of Fourier sheaves\footnote{See \cite[7.3.5]{KatzESDE} for the relevant definitions.} of $\Oc_\lambda$-modules over $\F_p$, then the family $(\FT_\psi(\Fc_\lambda))_{\lambda\in\Lambda}$ is compatible as well and $\cond(\widetilde{\FT_\psi}(\Fc_\lambda))\ll \cond(\widetilde\Fc_\lambda)^2$, where $\FT_\psi$ denotes the normalized Fourier transform with respect to $\psi$.
\end{lemma}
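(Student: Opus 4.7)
The plan is to treat the two conclusions separately, as each follows from standard properties of the $\ell$-adic Fourier transform (as developed by Deligne--Laumon in the $\Oc_\lambda$-coefficient setting).

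For compatibility, the hypothesis $\Q(\zeta_{4p}) \le E$ ensures that both the values of $\psi$ (which live in $\Q(\zeta_p)$) and the normalization $1/\sqrt{q^k}$ (which by Gauss sums lives in $\Q(\zeta_{4p})$) are elements of $E \cap \Oc_\lambda$ for every $\lambda$, so the normalized Fourier transform is well-defined on both the sheaf and the trace function side with compatible coefficients. For any finite extension $\F_{q^k}/\F_p$ and any $x \in \A^1(\F_{q^k})$, the trace function of $\FT_\psi(\Fc_\lambda)$ is given by the standard formula
\[
t_{\FT_\psi(\Fc_\lambda),q^k}(x) \;=\; -\frac{1}{\sqrt{q^k}} \sum_{y \in \F_{q^k}} t_{\Fc_\lambda,q^k}(y)\, \psi\bigl(\tr_{\F_{q^k}/\F_p}(xy)\bigr),
\]
and by hypothesis \ref{item:def:coherentFamily1} the right-hand side is independent of $\lambda$. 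Running this over all $k \ge 1$ gives $\lambda$-independent values of $\tr(\Frob_{x,q}^k \mid \FT_\psi(\Fc_\lambda)_{\overline\eta}^{I_x})$, which determine the characteristic polynomial of $\Frob_{x,q}$ via the Newton identities; hence $(\FT_\psi(\Fc_\lambda))_{\lambda\in\Lambda}$ is a compatible system.

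For the conductor bound, one first observes that reduction modulo $\qf$ commutes with Fourier transform (up to the natural isomorphism of $\F_\lambda$-sheaves), so it suffices to prove the corresponding inequality $\cond(\FT_\psi(\Gc)) \ll \cond(\Gc)^2$ for a single middle-extension Fourier sheaf $\Gc$ of $\F_\lambda$-modules. This is precisely the type of bound that has been made explicit by Fouvry--Kowalski--Michel using Laumon's local theory: the rank of $\FT_\psi(\Gc)$ is bounded via the Euler--Poincaré formula by a sum of terms of the form $\Swan_s(\Gc) + \rank(\Gc) - \dim \Gc^{I_s}$ over $s \in \Sing(\Gc)$, each linearly controlled by $\cond(\Gc)$; the number of finite singularities of $\FT_\psi(\Gc)$ is bounded by the number of slope-one breaks of $\Gc$ at $\infty$ (with a possible singularity at $\infty$ itself), hence by $\rank(\Gc) + 1 \le \cond(\Gc) + 1$; and Laumon's local Fourier transform expresses the slope decomposition at each singularity of $\FT_\psi(\Gc)$ in terms of local data of $\Gc$, yielding an at-most-quadratic bound on the Swan conductors. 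Summing gives the quadratic bound with an absolute implicit constant.

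The only real obstacle is careful bookkeeping in the Swan conductor estimate, since the local Fourier transform mixes rank and slope data and one must check that no factors of $p$ or $\ell$ sneak into the constant. However, since this bound is now standard in the literature on trace functions (see the references cited in the paper), we may simply invoke it, and the uniform boundedness of $\cond(\widehat\Fc_\lambda)$ in $\lambda$ then transfers to $\cond(\widehat{\FT_\psi}(\Fc_\lambda))$.
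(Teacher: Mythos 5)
Your proposal is correct and follows essentially the same route as the paper: compatibility is deduced from the fact that the discrete Fourier transform of the ($\lambda$-independent, $E$-valued) trace function over all finite extensions determines the characteristic polynomials of Frobenius at points of lissity of $\FT_\psi(\Fc_\lambda)$ --- the paper packages this via the Grothendieck--Lefschetz exponential identity where you use Newton's identities, which amounts to the same computation --- and the conductor bound is, as in the paper, an appeal to the Fouvry--Kowalski--Michel estimate (their Proposition 8.2, together with the remark in Katz handling the integral/reduction aspect). The only nitpick is that your trace-function formula should be asserted at points where $\FT_\psi(\Fc_\lambda)$ is lisse (which is all the compatibility condition requires), rather than at every $x\in\A^1(\F_{q^k})$.
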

\begin{proof}
    Let $\Fc=\Fc_\lambda$ and $\Gc=\FT_\psi(\Fc)$. By construction, for every finite extension $\F_q/\F_p$ and every $a\in U_\Gc(\F_q)$, the reverse characteristic polynomial $\det \left(1-\Frob_{a,q}T\mid \Gc_{\overline\eta}\right)$ is equal to
    \[\prod_{i=0}^{2} \det \left(1-\Frob_qT\mid H^i_c(U_\Gc\times\overline\F_p,\Fc\otimes\Lc_{\psi(ax)})\right)^{(-1)^{i+1}}_,\]
    where $\Lc_{\psi(ax)}$ denotes an Artin--Schreier sheaf and $H_c^i$ the $i$th $\ell$-adic cohomology group with compact support. By the Grothendieck--Lefschetz trace formula \cite[Exposé 2]{DelEC}, this is $\exp \left(\sum_{n\ge 1} S(a,n)T^n/n\right)$, where $S(a,n)=\sum_{x\in U_\Gc(\F_{q^n})} t_{\Fc,q^n}(x)\psi(\tr(ax))$ has image in $E$ and does not depend on $\lambda$ by hypothesis, whence the conclusion.

    The assertion on the conductors can be found in \cite[Proposition 8.2]{AlgebraicTwists}, along with \cite[Remark 1.10]{KatzGKM}.
\end{proof}

\subsubsection{Examples}

For the examples below, we let $E=\Q(\zeta_{4p})$, with ring of integers $\Oc=\Z[\zeta_{4p}]$.

\begin{proposition}[Kloosterman sheaves]\label{prop:Klc}
  Let $n\ge 2$ be a fixed integer coprime to $p$. For
  \[\Lambda_n=\{\lambda \ \ell\text{-adic valuation on } \Oc : \ p\neq\ell\gg_n 1, \ \ell\equiv 1\pmod{4}, \ \F_\lambda=\F_\ell\},\]
  there exists a coherent family $(\Klc_{n,\lambda})_{\lambda\in\Lambda_n}$ of sheaves of $\Oc_\lambda$-modules over $\F_p$, with monodromy group structure
  \[\begin{cases}
      \SL_n&:n\text{ odd}\\
      \Sp_n&:n\text{ even},
    \end{cases}
  \]
conductor bounded by $n+3$, and such that the trace function $t_{\Klc_{n,\lambda},q}$ is equal to the Kloosterman sum $\Kl_{n,q}$ on $\F_q^\times$.
\end{proposition}
\begin{proof}
  The construction of the Kloosterman sheaves is due to Deligne (see \cite{KatzGKM} for the construction via recursive Fourier transforms). As already mentioned, the assertion on the integral monodromy groups over $\F_\lambda$ can be found in \cite{PGIntMonKS16}. They form a compatible system for $n$ fixed by Lemma \ref{lemma:compSysFT} applied recursively.
\end{proof}
\begin{remark}
  As an illustration of \eqref{eq:codomaintLocalization}, note that $\Kl_{n,q}: \F_q\to\Z[\zeta_{4p}]_{q^{(n-1)/2}}$.
\end{remark}

The following example, when unnormalized (hence replacing $\Oc_\lambda$ by $\Z_\ell$), was treated in \cite{KowLS06} and \cite{KowLargeSieve08}:

\begin{proposition}[Point counting on families of hyperelliptic curves]\label{prop:pointCounting}
  Let $f\in\Z[X]$ be a squarefree polynomial of degree $2g\ge 2$, and let $\Lambda$ be the set of $\ell$-adic valuations of $\Oc$ with $\ell\ge 3$. For $p$ large enough, there exists a coherent family $(\Fc_{f,\lambda})_{\lambda\in\Lambda}$ of $\ell$-adic sheaves of $\Oc_\lambda$-modules over $\F_p$, with monodromy group structure $\Sp_{2g}$, conductor depending only on $f$, and such that $t_{\Fc_{f,\lambda},q}(z)$ is given by \eqref{eq:familiesHE} when $f(z)\neq 0$.
\end{proposition}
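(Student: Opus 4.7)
The plan is to realize $\Fc_f$ as the middle extension to $\P^1$ of the Tate twist $R^1\pi_\ast(\Oc_\lambda)(1/2)$, where $\pi\colon \mathcal{X}\to U$ is the smooth projective relative curve over the open $U:=\P^1-(Z_f\cup\{\infty\})$ whose fibre above $z$ is the smooth projective model of $y^2=f(x)(x-z)$. On $U$ this is lisse of $\Oc_\lambda$-rank $2g$ with free stalks, and its middle extension to $\P^1/\F_p$ is the candidate member of the coherent family. One then verifies the three conditions of Definition~\ref{def:coherentFamily} in turn.

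For item~\ref{item:def:coherentFamily1} and the trace function formula, proper base change and the Grothendieck-Lefschetz trace formula applied to $X_z$ show that the characteristic polynomial of $\Frob_{z,q}$ on the stalk of $\Fc_f$ at $z\in U(\F_q)$ is the $q^{1/2}$-normalization of the reciprocal characteristic polynomial of Frobenius on $H^1(X_z,\Oc_\lambda)$; its coefficients are rational integers independent of $\lambda$, and its trace is exactly $(q+1-|X_z(\F_q)|)/q^{1/2}$, which is \eqref{eq:familiesHE}.

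For item~\ref{item:coherentFamily2}, Poincaré duality on the fibres yields a perfect symplectic cup-product pairing on $\Fc_f$, so $G_\geom(\widehat\Fc_f)\le\Sp_{2g}(\F_\lambda)$. Picard-Lefschetz theory at each point of $Z_f$ contributes a symplectic transvection in the local monodromy of the residual representation, and for $\ell\ge 3$ and $p$ large enough in terms of $f$, the Chavdarov/J.K.~Yu/C.~Hall-type argument recalled in \cite[Section 8.8]{KowLargeSieve08} (combining absolute irreducibility of the residual representation with the classification of irreducible subgroups of $\Sp_{2g}(\F_\lambda)$ generated by transvections) forces these transvections to generate all of $\Sp_{2g}(\F_\lambda)$. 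Equality of arithmetic and geometric monodromy then follows from the geometric triviality of $\det\widehat\Fc_f$ after Tate twist and the control of the normalizer of $\Sp_{2g}$ inside $\GL_{2g}$. This is the main obstacle, since we demand the conclusion for \emph{every} $\ell\ge 3$, not merely for a density-one set of $\lambda$; it is precisely at this step that we need the full strength of the integral monodromy results for hyperelliptic families, and that $p$ must be assumed large enough to avoid residual degeneracy.

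Finally, for item~\ref{item:coherentFamily3}: $\Sing(\widehat\Fc_f)\subset Z_f\cup\{\infty\}$ has cardinality at most $2g+1$ independently of $\lambda$ and $p$; the local monodromies at points of $Z_f$ are transvections and hence tame, so the Swan conductor there vanishes; and a direct analysis of the degeneration at $\infty$ (or equivalently a global Grothendieck-Ogg-Shafarevich Euler-Poincaré computation for the relative curve $\pi$) bounds $\Swan_\infty(\widehat\Fc_f)$ purely in terms of $g$. Hence $\cond(\widehat\Fc_f)=O_g(1)$ uniformly in $\lambda$ and $p$, completing the verification.
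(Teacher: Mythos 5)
Your construction and verification follow essentially the same route as the paper: the sheaf is the relative $H^1$ of the hyperelliptic family (as in Katz--Sarnak, Section 10.1) normalized by a half Tate twist, compatibility and the trace identity come from the Lefschetz trace formula, the geometric monodromy is identified with $\Sp_{2g}(\F_\lambda)$ by the Yu/Hall transvection argument (the paper cites Hall's Theorem 1.2 directly), and the conductor bound is the routine tameness count you give.

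One step is stated imprecisely, though it is easily repaired and does not change the architecture: you deduce $G_\arith(\widehat\Fc_f)=G_\geom(\widehat\Fc_f)$ from ``geometric triviality of $\det\widehat\Fc_f$ after Tate twist and the normalizer of $\Sp_{2g}$ in $\GL_{2g}$''. The normalizer argument only places $G_\arith$ inside $\GSp_{2g}(\F_\lambda)$, and triviality of the determinant does not force the similitude character to be trivial (an element of multiplier $\mu$ has determinant $\mu^g$, so $\det=1$ only gives $\mu^g=1$). The correct mechanism --- and the one the paper invokes via \cite[Theorem 10.1.16, Lemma 10.1.9]{KatzSarnak91} --- is that after the half Tate twist the cup-product pairing takes values in the trivial module, so arithmetic Frobenius preserves the symplectic form with multiplier exactly $1$; hence $G_\arith\le\Sp_{2g}(\F_\lambda)$, and equality follows since $G_\geom=\Sp_{2g}(\F_\lambda)$. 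With that substitution your argument matches the paper's proof.
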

\begin{proof}
  For the construction, see \cite[Section 10.1]{KatzSarnak91}, and normalize by a Tate twist. Because of this normalization, \cite[Theorem 10.1.16]{KatzSarnak91} and \cite[Lemma 10.1.9]{KatzSarnak91} show that the arithmetic and geometric monodromy group preserve the same symplectic pairing. Finally, \cite[Theorem 1.2]{Hall08} shows that the geometric monodromy group is $\Sp_{2g}$.
\end{proof}

\subsection{The large sieve for Frobenius}
\begin{theorem}\label{thm:largeSieve}
  Let $\Lambda$ be a set of valuations (or equivalently prime ideals) on $\Oc$. Given $L\ge 1$, we write $\Lambda_L$ for the set of valuations in $\Lambda$ corresponding to ideals of norm at most $L$. Let $(\Fc_\lambda)_{\lambda\in\Lambda}$ be a coherent family, with monodromy group structure $G$, where $\widetilde\Fc_\lambda$ corresponds to a representation
   \[\rho_\lambda: \pi_{1,p}\to\GL_n(\Oc_\lambda)\to\GL_n(\F_\lambda).\]
  For every $\lambda\in\Lambda$, let $\Omega_\lambda\subset G(\F_\lambda)$ be a conjugacy-invariant subset. Then, for all $L\ge 1$,
  \begin{equation*}
    \label{eq:largeSieve0}
    \frac{|\{x\in U_{\Fc_\lambda}(\F_q) : \rho_\lambda(\Frob_{x,q})\not\in\Omega_\lambda\textnormal{ for all }\lambda\in\Lambda_L\}|}{q}\ll \left(1+\frac{L^B}{q^{1/2}}\right)\frac{1}{P(L)},
  \end{equation*}
  where the implied constant depends only on the conductor of the family, and
  \begin{equation}
    \label{eq:BGeneral}
    P(L)=\sum_{\lambda\in\Lambda_L} \frac{|\Omega_\lambda|}{|G(\F_\lambda)|}, \ B=
  \begin{cases}
    \frac{2n^2+n-1}{2}&: G=\SL_n\\
    \frac{2n^2+3n+4}{4}&: G=\Sp_n\ (n\text{ even}).
  \end{cases}
  \end{equation}
\end{theorem}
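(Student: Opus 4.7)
The plan is to adapt the large sieve for Frobenius developed in \cite[Chapter 8]{KowLargeSieve08} to the present compatible-system setting, the only substantive novelty being that the residual representations land in $G(\F_\lambda)$ for $\lambda$ ranging over valuations of $\Oc$ rather than over rational primes. The three main ingredients are: the abstract dual form of the large sieve; a Goursat-Kolchin-Ribet style independence statement for the residual representations across distinct $\lambda$; and Deligne's Riemann hypothesis applied to exponential sums twisted by irreducible characters of $G(\F_\lambda)$, with Betti number control coming from \cite{KatzBetti01}.

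First I would apply the dual large sieve inequality: writing $N$ for the numerator on the left-hand side, one has $N \le \Delta/H$, where $H$ is (up to normalization) $P(L)$ and $\Delta$ is the large sieve constant for the orthonormal system obtained by pulling back to $U_{\Fc}(\F_q)$ the nontrivial irreducible characters of each $G(\F_\lambda)$ along the map $x \mapsto \rho_\lambda(\Frob_{x,q})$. Expanding squares, $\Delta$ is bounded by $q$ plus off-diagonal terms of the shape
\begin{equation*}
\Bigl|\sum_{x\in U_\Fc(\F_q)} \chi(\rho_\lambda(\Frob_{x,q}))\overline{\chi'(\rho_{\lambda'}(\Frob_{x,q}))}\Bigr|,
\end{equation*}
indexed by pairs $(\lambda,\chi)\neq(\lambda',\chi')$, where $\chi,\chi'$ range over nontrivial irreducible characters of $G(\F_\lambda)$ and $G(\F_{\lambda'})$.

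The heart of the argument is to extract square-root cancellation from each off-diagonal sum. When $\lambda\neq\lambda'$, a Goursat-Kolchin-Ribet type lemma, combined with condition \ref{item:coherentFamily2} and the fact that the classical groups $G(\F_\lambda)$ and $G(\F_{\lambda'})$ admit no common nontrivial simple quotient as soon as the residue fields are distinct, implies that the joint representation $\rho_\lambda\times\rho_{\lambda'}$ surjects onto $G(\F_\lambda)\times G(\F_{\lambda'})$; hence $\chi\otimes\overline{\chi'}$ descends to a nontrivial irreducible representation of the joint monodromy group. The corresponding twisted sheaf on $U_\Fc$ is then geometrically nontrivial and pointwise pure, so Deligne's Riemann hypothesis \cite{Del2} yields a bound $\ll q^{1/2}(\dim\chi)(\dim\chi')$, the implicit constant being polynomial in the sum of Betti numbers of the twisted sheaf and hence uniformly bounded by \cite{KatzBetti01} together with the conductor hypothesis \ref{item:coherentFamily3}. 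The case $\lambda=\lambda'$, $\chi\neq\chi'$ is handled analogously on $G(\F_\lambda)$ alone.

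Summing the off-diagonal contributions over all character pairs and over $\Lambda_L$, and invoking the character theory of the finite classical groups (Plancherel's formula $\sum_\chi(\dim\chi)^2=|G(\F_\lambda)|$, bounds on the number of conjugacy classes, and the growth of character degrees from Deligne-Lusztig theory or direct computation) together with $\#\{\qf:N(\qf)\le L\}\ll L$, one gets $\Delta\ll q+L^B q^{1/2}$ with $B$ as in \eqref{eq:BGeneral}. Dividing by $H\asymp P(L)$ then yields the announced bound. The main obstacle I anticipate is the explicit bookkeeping of the exponent $B$: one must match the Betti-number contribution of Deligne-Katz to the growth of character degrees so that the three different values of $B$ come out correctly in the cases $\SL_n$, $\Sp_n$ and $\SO_{2n}^\pm$, $\SO_{2n+1}$, and to handle uniformly the possibility that two distinct $\qf,\qf'\in\Lambda_L$ share residue characteristic, so that the Goursat input still applies. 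Everything else is a faithful transcription of \cite[Chapter 8]{KowLargeSieve08} from $\Z_\ell$-coefficients to $\Oc_\lambda$-coefficients.
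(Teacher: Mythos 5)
Your overall route is the paper's route: the paper proves Theorem \ref{thm:largeSieve} by invoking the large sieve for Frobenius of \cite[Proposition 3.3]{KowLS06} and \cite[Chapter 8]{KowLargeSieve08} (the dual sieve inequality, quasi-orthogonality of the pullbacks of nontrivial irreducible characters via Deligne's Riemann hypothesis \cite{Del2} with Betti/conductor control from \cite{KatzBetti01} and Condition \ref{item:coherentFamily3}), and only supplies two new inputs: the surjectivity of $\rho_\lambda\times\rho_{\lambda'}$ for distinct $\lambda,\lambda'\in\Lambda$, and the value $B=1+\dim(G)+\rank(G)/2$ from \cite[Corollary 24.6]{TesMal11}, which is exactly the exponent your bookkeeping $\Delta\ll q+L^Bq^{1/2}$ (max character degree times the sum over $(\lambda',\chi')$ of degrees, using $|{\Irr}|\approx$ number of classes $\ll|\F_\lambda|^{\rank}$) produces.

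There is, however, one genuine gap, and it is precisely at the step you flag but do not resolve. Your justification of the independence --- that $G(\F_\lambda)$ and $G(\F_{\lambda'})$ have no common nontrivial simple quotient once the residue fields are ``distinct'' --- fails in the relevant generality: two distinct prime ideals $\qf\neq\qf'$ of $\Oc$ above the same rational prime $\ell$ and with the same inertia degree have \emph{isomorphic} residue fields, so the two finite groups are isomorphic and do share their simple quotient. These pairs cannot be discarded: for $E=\Q(\zeta_{4p})$ all $\varphi(4p)/f_\ell$ primes above a split $\ell$ have the same norm, and the lower bound on $|\Lambda_L|$ in Proposition \ref{prop:chebotarevCyclotomic} counts all of them --- this is what yields the uniformity in $p$ in Propositions \ref{prop:LSKlPower} and \ref{prop:LSKlf}; keeping one prime per $\ell$ would lose a factor of order $p$ in $P(L)$. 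For such a pair, Goursat only says the image of $\rho_\lambda\times\rho_{\lambda'}$ is either full or (essentially) the graph of an isomorphism twisted by an automorphism of $G(\F_\lambda)$, and excluding the graph case requires an argument about the two reductions modulo $\qf$ and $\qf'$ of one compatible system, not about the abstract groups; this is exactly what the paper's appeal to \cite[Corollary 2.6]{KowLS06}, extended to residue fields of non-prime order via \cite[Part III]{TesMal11}, is meant to cover. Until you establish surjectivity for these same-norm pairs (or restrict $\Lambda$ and accept a much weaker, $p$-dependent $P(L)$), the quasi-orthogonality over all of $\Lambda_L$, and hence the stated bound with the full $P(L)$, is not proved; the rest of your outline is a faithful reconstruction of the cited machinery.
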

\begin{proof}
      This is a variant of \cite[Proposition 3.3]{KowLS06} (see also \cite[Chapter 8]{KowLargeSieve08}). For $\lambda,\lambda'\in\Lambda$ distinct, the product map $\pi_{1,p}\to G(\F_\lambda)\times G(\F_{\lambda'})$ is surjective by \cite[Corollary 2.6]{KowLS06} (a variant of Goursat's Lemma), which extends with no modification to the case where $\F_\lambda$ and $\F_{\lambda'}$ do not necessarily have prime order (see \cite[Part III]{TesMal11}). By \cite[Corollary 24.6]{TesMal11}, $B=1+\dim(G)+\rank(G)/2$.
\end{proof}

\begin{remark}
  Note that in the case $E=\Q(\zeta_{d})$ of the examples of Section \ref{subsec:expSumsCycl}, the size of the residue field $\F_\lambda$ corresponding to a prime ideal $\lf\normal\Z[\zeta_d]$ depends on the multiplicative order modulo $d$ of the prime $\ell$ above which $\lf$ lies (see \cite[Theorem 2.13]{Was97}). In particular, if $d=4p$, then $|\F_\lambda|$ depends on $p$. This is a new phenomenon compared to the degree $1$ case (i.e. $\Oc_\lambda=\Z_\ell$) studied in \cite{KowLS06} and \cite{KowLargeSieve08}.
\end{remark}
\begin{remark}
  The case of orthogonal monodromy group structures (that would appear in some variants of the examples in Section \ref{sec:examples}) is excluded in the definition of a coherent family, because the argument above does not apply in general: see the remark after \cite[Corollary 2.6]{KowLS06}. A similar difficulty arises in Theorem \ref{thm:monodromyLP} later on: see Remark \ref{rem:LP}\ref{rem:LPSO}.
\end{remark}

\section{Traces of random matrices and Gaussian sums}\label{sec:tracesrandommatrices}

In the next section, we will apply Theorem \ref{thm:largeSieve} to $\Omega_\lambda=\{g\in G(\F_\lambda): \tr(g)\not\in A_\lambda\}$, for some $A_\lambda\subset\F_\lambda$. In this section, we get estimates on the densities
\[P\left(\tr(g)\not\in A_\lambda\right):=\frac{|\Omega_\lambda|}{|G(\F_\lambda)|}.\]

By the orthogonality relations in $\F_\lambda$, we get the following:
\begin{proposition}\label{prop:probTrG}
  Let $G\le\GL_n(\F_\lambda)$ be a subgroup and $A\subset\F_\lambda$. Then
  \begin{eqnarray*}
    P\left(\tr(g)\in A\right)&:=&\frac{1}{|G|}\sum_{g\in G} 1_A(\tr(g))\\
                             &=&\frac{|A|}{|\F_\lambda|}+O\left(\max_{1\neq\psi\in\hat\F_\lambda} \left|\frac{1}{|G|}\sum_{g\in G} \psi(\tr(g))\right|\left|\sum_{x\in A} \psi(-x)\right|\right).
  \end{eqnarray*}
\end{proposition}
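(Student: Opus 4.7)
The plan is to apply Fourier inversion on the finite abelian group $(\F_\lambda,+)$ to the indicator function $\mathbf{1}_A$, and then swap the order of summation so that the sum over $g \in G$ becomes the ``Gaussian sum'' $\frac{1}{|G|}\sum_{g \in G} \psi(\tr(g))$ that appears in the statement.

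Concretely, I would start from the orthogonality identity
\[\mathbf{1}_A(y) \;=\; \frac{1}{|\F_\lambda|}\sum_{x \in A}\sum_{\psi \in \hat\F_\lambda}\psi(y-x) \;=\; \frac{1}{|\F_\lambda|}\sum_{\psi \in \hat\F_\lambda}\psi(y)\sum_{x \in A}\psi(-x),\]
valid for every $y \in \F_\lambda$. Substituting $y = \tr(g)$ and averaging over $g \in G$ gives
\[P\bigl(\tr(g)\in A\bigr) \;=\; \frac{1}{|\F_\lambda|}\sum_{\psi \in \hat\F_\lambda}\Bigl(\frac{1}{|G|}\sum_{g \in G}\psi(\tr(g))\Bigr)\Bigl(\sum_{x \in A}\psi(-x)\Bigr).\]

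Next I would isolate the contribution of the trivial character $\psi = 1$, which gives exactly $\frac{|A|}{|\F_\lambda|}$ (since the inner $G$-average equals $1$ and $\sum_{x \in A} 1 = |A|$). The remaining terms, indexed by the $|\F_\lambda| - 1$ nontrivial characters, are then bounded trivially by the maximum of the summand times $\frac{|\F_\lambda|-1}{|\F_\lambda|} \le 1$, yielding exactly the error term stated. This is all elementary; the proposition is really just a packaging of Fourier inversion that reduces the distributional question for $\tr(g) \pmod{\lambda}$ to the two independent problems of estimating the character sum over $G$ and the character sum over $A$, which are then handled separately in the next section.

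There is no real obstacle: the only mild point to watch is the sign convention in the Fourier kernel (whether one writes $\psi(-x)$ or $\psi(x)$ inside the second factor), but this is purely cosmetic since the maximum is taken over all nontrivial $\psi$ and the set $\{\psi : \psi \ne 1\}$ is stable under $\psi \mapsto \bar\psi$.
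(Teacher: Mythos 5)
Your argument is correct and is exactly what the paper intends: the proposition is stated as a direct consequence of the orthogonality relations in $\F_\lambda$, i.e.\ the Fourier expansion of $\mathbf{1}_A$, isolation of the trivial character giving the main term $|A|/|\F_\lambda|$, and the trivial bound of the $|\F_\lambda|-1$ nontrivial-character terms by their maximum. Nothing further is needed.
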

We expect, for nontrivial $\psi\in\hat\F_\lambda$,
\begin{equation}
  \label{eq:gaussianSumClassical}
  \frac{1}{|G|}\sum_{g\in G} \psi(\tr(g))\ll |\F_\lambda|^{-\alpha(G)}
\end{equation}
for some $\alpha(G)>0$, and similarly, if $A$ is ``well-distributed'' in $\F_\lambda$, we expect
\begin{equation}
  \label{eq:boundGaussSumA}
  \frac{1}{|A|}\sum_{x\in A} \psi(x)\ll|\F_\lambda|^{-\alpha(A)}
\end{equation}
for some $\alpha(A)>0$. In both cases, the bounds should be uniform with respect to all nontrivial $\psi\in\hat\F_\lambda$.

Under \eqref{eq:gaussianSumClassical} and \eqref{eq:boundGaussSumA}, Proposition \ref{prop:probTrG} becomes
\begin{equation}
  \label{eq:probTrG}
  P\left(\tr(g)\in A\right)=\frac{|A|}{|\F_\lambda|}\left(1+O \left(|\F_\lambda|^{-\alpha(G)-\alpha(A)+1}\right)\right).
\end{equation}

\subsection{Gaussian sums in linear groups \eqref{eq:gaussianSumClassical}}

\subsubsection{General result}

We start by a result that applies more generally to algebraic varieties in $\GL_n$.

\begin{proposition}\label{prop:boundGaussianSumClassical}
  Let $V=\bs V(\F_\lambda)$ for $\bs V\subset\GL_n$ an algebraic variety over $\F_\lambda$. The bound \eqref{eq:gaussianSumClassical} holds with $\alpha(V)=1/2$, uniformly for all nontrivial $\psi\in\hat\F_\lambda$, unless $\tr: V\to\F_\lambda$ is constant.
\end{proposition}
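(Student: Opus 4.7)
The plan is to realize the character sum as a trace of Frobenius on $\ell$-adic cohomology and invoke Deligne's Riemann Hypothesis. Set $d=\dim\bs V$ and let $\Lc_{\psi\circ\tr}:=\tr^*\Lc_\psi$ denote the pullback to $\bs V$ of the Artin--Schreier sheaf on $\mathbb{A}^1$ attached to $\psi$; this is a lisse rank-one sheaf, pure of weight $0$. The Grothendieck--Lefschetz trace formula gives
\[
\sum_{g\in V}\psi(\tr(g))=\sum_{i=0}^{2d}(-1)^i\tr\bigl(\Frob\bigm| H^i_c(\bs V_{\overline\F_\lambda},\Lc_{\psi\circ\tr})\bigr),
\]
and by purity each Frobenius eigenvalue on $H^i_c$ has complex absolute value at most $|\F_\lambda|^{i/2}$ (Deligne's Weil~II). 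Using this crudely yields $\ll\sigma(\bs V)\cdot|\F_\lambda|^d$ with $\sigma(\bs V):=\sum_i\dim H^i_c(\bs V_{\overline\F_\lambda},\Lc_{\psi\circ\tr})$; the extra factor of $|\F_\lambda|^{-1/2}$ must come from vanishing of the top cohomology $H^{2d}_c$.

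For this I would decompose $\bs V$ into its geometric irreducible components and handle them additively. On a top-dimensional component $\bs V_j$ the restricted sheaf $\Lc_{\psi\circ\tr}|_{\bs V_j}$ is geometrically trivial if and only if the composition $\psi\circ\tr|_{\bs V_j}$ defines the trivial character of $\pi_1^\geom(\bs V_j)$, equivalently if and only if $\tr|_{\bs V_j}$ is an Artin--Schreier coboundary; the hypothesis that $\tr:V\to\F_\lambda$ is nonconstant (interpreted as nontrivially varying on each top-dimensional component) rules this out, forcing $H^{2d}_c(\bs V_{\overline\F_\lambda},\Lc_{\psi\circ\tr})=0$. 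Lower-dimensional components contribute at worst $O(|\F_\lambda|^{d-1})$ and are absorbed. The Lefschetz sum is therefore $\ll\sigma(\bs V)\cdot|\F_\lambda|^{d-1/2}$.

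Finally, Katz's bounds on sums of Betti numbers \cite{KatzBetti01} control $\sigma(\bs V)$ uniformly in $\lambda$ in terms of the complexity of $\bs V$ (ambient dimension $n$ and degree/number of defining equations), which is absorbed into the implicit constant. Lang--Weil applied to any top-dimensional component yields $|V|=|\bs V(\F_\lambda)|\gg|\F_\lambda|^d$, and division gives the claimed $|V|^{-1}\sum_{g\in V}\psi(\tr(g))\ll|\F_\lambda|^{-1/2}$. The delicate step is the third one: the precise interpretation of ``nonconstant'' at the level of geometric components and the ensuing vanishing of $H^{2d}_c$; once this is secured, the rest is a mechanical combination of Weil~II with Katz's Betti bounds.
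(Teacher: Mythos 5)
Your argument is essentially the paper's proof: realize the sum via the Artin--Schreier (Lang torsor) sheaf $\Lc_{\psi\circ\tr}$ restricted to $\bs V$, apply Grothendieck--Lefschetz and Deligne's Weil~II bounds, kill the top compactly-supported cohomology by the coinvariant formula under geometric nontriviality (which is exactly where the nonconstancy of $\tr$ enters, and your component-by-component reading of that hypothesis matches the intended meaning), and control the total Betti numbers by Katz's bounds. The only cosmetic difference is the final lower bound on $|V|$: you invoke Lang--Weil on a top-dimensional component, whereas the paper quotes the order formula for the groups of Lie type to which the proposition is applied; both yield $|V|\gg|\F_\lambda|^{\dim\bs V}$ in the relevant cases.
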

\begin{proof}
  Let $\ell'\neq\car(\F_\lambda)$ be an auxiliary prime and let us consider the restriction $\Lc$ of the Lang torsor $\Lc_{\psi\circ\tr}$ on $\A^{n^2}/\F_\lambda$ to $\bs V$ (see \cite[Example 7.17]{KiRu14}), as sheaf of $\Q_{\ell'}$-modules. By the Grothendieck--Lefschetz trace formula,
  \[\sum_{g\in V} \psi(\tr(g))=\sum_{i=0}^{2\dim \bs V} (-1)^i\tr\left(\Frob_{\F_\lambda}\mid H^i_c(\bs V\times\overline\F_\lambda,\Lc)\right).\]
  By Deligne's generalization of the Riemann hypothesis over finite fields \cite{Del2},
  \[\tr\left(\Frob_{\F_\lambda}\mid H^i_c(\bs V\times\overline\F_\lambda,\Lc)\right)\le |\F_\lambda|^{i/2}\dim H^i_c(\bs V\times\overline\F_\lambda,\Lc)\]
  for $0\le i\le 2\dim\bs V$, and by the coinvariant formula,
  \[\tr\left(\Frob_{\F_\lambda}\mid H^{2\dim\bs V}_c(\bs V\times\overline\F_\lambda,\Lc)\right)=0\]
  unless $\Lc$ is geometrically trivial, in which case $\tr: V\to\F_\lambda$ would be constant. Hence
  \[\left|\sum_{g\in V} \psi(\tr(g))\right|\le |\F_\lambda|^{\dim\bs V-1/2} \sum_{i=0}^{2\dim \bs V-1}\dim H^i_c(\bs V\times\overline\F_\lambda,\Lc).\]
  By \cite[Theorem 12]{KatzBetti01}, we find that
  \[\left|\sum_{g\in V} \psi(\tr(g))\right|\le 3|\F_\lambda|^{\dim\bs V-1/2}(2+d)^{n^2+r}\]
  if $\bs V$ is defined by $r$ polynomials of degree at most $d$. The conclusion follows by \cite[Corollary 24.6]{TesMal11}.
\end{proof}
\subsubsection{Classical finite groups of Lie type}

Using the Bruhat decomposition, D.S. Kim actually explicitly evaluated the Gaussian sums \eqref{eq:gaussianSumClassical} for classical finite groups of Lie type (see e.g. \cite{Kim97,Kim98}). The expressions involve hyper-Kloosterman sums, and applying Deligne's bound yields the following, which greatly improves Proposition \ref{prop:boundGaussianSumClassical}, in particular as $n$ grows:
\begin{proposition}\label{prop:boundGaussianSumClassicalKim}
  For $n\ge 1$ and $G=\GL_n(\F_\lambda)$, $\SL_n(\F_\lambda)$, $\Sp_{2n}(\F_\lambda)$, $\SO_{2n}^\pm(\F_\lambda)$ and $\SO_{2n+1}(\F_\lambda)$, the bound \eqref{eq:gaussianSumClassical} holds with $\alpha(G)\ge 1$ given in Table \ref{table:boundsGaussianSumsClassical}.
\end{proposition}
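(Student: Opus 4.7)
The plan is to appeal to the explicit evaluations of Gaussian sums on classical finite groups of Lie type due to D.S.~Kim, and then to invoke Deligne's bound on hyper-Kloosterman sums. First, for each group $G$ in the list, I would write the Bruhat decomposition $G=\bigsqcup_{w\in W}BwB$ with $B$ a Borel subgroup and $W$ the Weyl group, and parametrize each cell by the unipotent radical of $B$ together with the standard torus. The inner sums over the unipotent part reduce, by orthogonality in $\F_\lambda$, to incidence conditions on the torus coordinates, so that $\sum_{g\in BwB}\psi(\tr(g))$ can be written as a linear combination of products of hyper-Kloosterman sums $\Kl_{m,|\F_\lambda|}(a)$ of various ranks $m\le n$, with coefficients depending only on the length of $w$. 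This is exactly the computation carried out cell by cell by Kim for $\GL_n$, $\SL_n$, $\Sp_{2n}$, $\SO_{2n}^\pm$ and $\SO_{2n+1}$.

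Second, applying Deligne's estimate $|\Kl_{m,|\F_\lambda|}(a)|\le m$ to each of these hyper-Kloosterman factors, and summing the contributions of all Bruhat cells against the trivial bound for the remaining torus parameters, one obtains an inequality of the shape $|\sum_{g\in G}\psi(\tr(g))|\ll |G|\,|\F_\lambda|^{-\alpha(G)}$ with $\alpha(G)\ge 1$. The exact value of $\alpha(G)$ recorded in Table~\ref{table:boundsGaussianSumsClassical} is then read off from the dimension and the rank of the corresponding algebraic group, together with the degree of the hyper-Kloosterman sum appearing in the dominant Bruhat cell.

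The main obstacle is the bookkeeping within each family: one has to verify that cells of small length, and the contribution of the central torus, do not overwhelm the saving coming from the dominant cell, so that the exponent $\alpha(G)\ge 1$ is attained uniformly in $n$. This is precisely the content of Kim's case analysis, which we invoke rather than reproduce.
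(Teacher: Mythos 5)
Your route is exactly the paper's: the proof there simply cites \cite[Proposition 6.28]{PGDistrTFCycl16}, which rests on D.S.~Kim's Bruhat-decomposition evaluations of these Gaussian sums in terms of hyper-Kloosterman sums, followed by Deligne's bound, just as you describe. So the proposal is correct and takes essentially the same approach, with the same reliance on Kim's case-by-case computations rather than reproducing them.
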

\begin{proof}
  See \cite[Proposition 6.28]{PGDistrTFCycl16}.
\end{proof}
  \begin{table}
    \centering
    \begin{tabular}{c|c}
      $G$&$\alpha(G)$\\\hline
      $\GL_n$&$\frac{n(n-1)}{2}$\\
      $\SL_n$&$\frac{n^2-1}{2}$\\
      $\Sp_n$, $\SO_n^-$ \textup{(}$n$\textup{ even)}&$\frac{n(n+2)}{8}$\\
      $\SO_n$ \textup{(}$n$\textup{ odd)}&$\frac{n^2-1}{8}$\\
      $\SO_n^+$ \textup{(}$n$\textup{ even)}&$\frac{n(n-2)}{8}$
    \end{tabular}
    \caption{Cancellation for Gaussian sums over finite groups of Lie type.}
    \label{table:boundsGaussianSumsClassical}
  \end{table}

\subsection{Gaussian sums in $\F_\lambda$}\label{subsec:gaussianSumsFlambda}

Let us now consider Bound \eqref{eq:boundGaussSumA} for various subsets $A\subset\F_\lambda$.

\subsubsection{Squares} Let $A=\F_\ell^{\times 2}$ be the subgroup of squares in $\F_\ell^\times$ with $\ell>2$. Using the Legendre symbol and the evaluation of quadratic Gauss sums, we get that \eqref{eq:boundGaussSumA} holds with $\alpha(A)=1/2$, uniformly for all nontrivial $\psi\in\hat\F_\ell$, corresponding to square-root cancellation since $|A|=(\ell-1)/2$.

\subsubsection{Powers/Multiplicative subgroups}
More generally, we have:
\begin{proposition}
  For $\alpha\in(0,1/2)$, Bound \eqref{eq:boundGaussSumA} holds for any subgroup $H\le\F_\lambda^\times$ such that $|H|\ge|\F_\lambda|^{1/2+\alpha}$, uniformly for all nontrivial $\psi\in\hat\F_\lambda$.
\end{proposition}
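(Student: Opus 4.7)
The plan is to expand the indicator of $H$ in multiplicative characters, reducing the sum over $H$ to a linear combination of Gauss sums to which the classical Weil evaluation applies. Since $|H|\ge |\F_\lambda|^{1/2+\alpha}$, the expected square-root cancellation for Gauss sums will already suffice, without any need for the deeper techniques of Bourgain-Glibichuk-Konyagin.

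Concretely, I would let $H^\perp:=\{\chi\in\widehat{\F_\lambda^\times} : \chi|_H\equiv 1\}$, so that $|H^\perp| = (|\F_\lambda|-1)/|H|$, and invoke orthogonality of multiplicative characters on $\F_\lambda^\times$ to write $1_H(x) = \frac{|H|}{|\F_\lambda|-1}\sum_{\chi\in H^\perp}\chi(x)$ for $x\in\F_\lambda^\times$. Substituting and exchanging summations gives
\[
\sum_{x\in H}\psi(x) \;=\; \frac{|H|}{|\F_\lambda|-1}\sum_{\chi\in H^\perp}\tau(\chi,\psi), \qquad \tau(\chi,\psi) := \sum_{x\in\F_\lambda^\times}\chi(x)\psi(x).
\]
The trivial character contributes $\tau(1,\psi)=-1$, while each of the remaining $|H^\perp|-1$ nontrivial characters contributes a Gauss sum of modulus exactly $|\F_\lambda|^{1/2}$, uniformly in the nontrivial additive character $\psi$. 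Combining these bounds yields $\bigl|\sum_{x\in H}\psi(x)\bigr| \le |\F_\lambda|^{1/2}+1$, and dividing by $|H|$ together with the hypothesis $|H|\ge |\F_\lambda|^{1/2+\alpha}$ gives $|H|^{-1}\bigl|\sum_{x\in H}\psi(x)\bigr| \ll |\F_\lambda|^{-\alpha}$. Hence \eqref{eq:boundGaussSumA} holds with $\alpha(H)=\alpha$.

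No single step is a genuine obstacle: the argument is a one-shot application of multiplicative Fourier duality combined with the evaluation of Gauss sums, and the uniformity in $\psi$ is automatic. The only conceptual point worth emphasizing is that the restriction $\alpha\in(0,1/2)$ is essentially sharp for this elementary route, since below the $|\F_\lambda|^{1/2}$ threshold the individual Gauss sum bound ceases to beat the trivial estimate by $|H|$, and one would have to appeal instead to the Bourgain-Glibichuk-Konyagin sum-product machinery mentioned in the introduction to Section \ref{subsec:gaussianSumsFlambda}.
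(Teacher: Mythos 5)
Your proposal is correct and takes essentially the same route as the paper: both reduce the statement to the complete-sum bound $\left|\sum_{x\in H}\psi(x)\right|\ll|\F_\lambda|^{1/2}$ and then divide by $|H|\ge|\F_\lambda|^{1/2+\alpha}$ to get the exponent $\alpha$. The only difference is one of provenance: you derive the key bound in a self-contained, elementary way by expanding the indicator of $H$ over the multiplicative characters trivial on $H$ and using the exact modulus $|\F_\lambda|^{1/2}$ of nontrivial Gauss sums (with uniformity in $\psi$ automatic), whereas the paper simply cites this bound as a consequence of Deligne's Riemann hypothesis via an earlier reference.
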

\begin{proof}
  This follows for example from the bound $\sum_{x\in H} \psi(x)\ll |\F_\lambda|^{1/2}$ that is deduced from Deligne's extension of the Riemann hypothesis over finite fields (see \cite[Proposition 5.7]{PGDistrTFCycl16}).
\end{proof}

\begin{example}
  For $m\ge 2$ fixed and $H=\F_\lambda^{\times m}$ the subgroup of $m$th powers, the condition $|H|\ge|\F_\lambda|^{1/2+\alpha}$ holds as soon as $|\F_\lambda|$ is large enough, since $|H|=\frac{|\F_\lambda|-1}{(m, |\F_\lambda|-1)}$.
\end{example}

\begin{remark}
  When $|H|$ is arbitrarily small (say $|H|\ge|\F_\lambda|^{\delta}$ for some $\delta>0$), the works of Bourgain and others (see e.g. \cite{BouCh06}) give \eqref{eq:boundGaussSumA} for some $\alpha=\alpha(\delta)>0$, up to some necessary restrictions if $\delta\le 1/2$ and $\F_\lambda\neq\F_\ell$.
\end{remark}

\subsubsection{Definable subsets}\label{subsub:definableSubsets}
For $R$ a ring and $\varphi(x)$ a first-order formula in one variable in the language of rings, we define $\varphi(R)=\{a\in R : \varphi(a)\text{ holds}\}$.

\begin{example}
  For $\varphi(x)=(\exists y : x=y^2)$, the set $\varphi(R)$ is the subset of squares, as in the previous section. More generally, we can take $\varphi(x)=(\exists y: x=f(y))$ for any polynomial $f\in \Z[Y]$.
\end{example}

We recall:
\begin{theorem}[Chatzidakis--van den Dries--Macintyre \cite{CDM92}]\label{thm:CDM}
  For every formula $\varphi(x)$ in one variable in the language of rings, there exists a finite set $C(\varphi)\subset(0,1]\cap\Q$ such that for every finite field $\F_\lambda$,
  \begin{eqnarray}
    |\varphi(\F_\lambda)|&=&C(\lambda,\varphi)|\F_\lambda|+O_\varphi(|\F_\lambda|^{1/2})\label{eq:CDM1}\\
    &&\text{with }C(\lambda,\varphi)\in C(\varphi)\text{, or}\nonumber\\
    |\varphi(\F_\lambda)|&\ll_\varphi& |\F_\lambda|^{-1/2}.\label{eq:CDM2}
  \end{eqnarray}
  The implied constants depend only on $\varphi$.
\end{theorem}
  
  The following combined with Theorem \ref{thm:CDM} shows that Gaussian sums over definable subsets exhibit square-root cancellation:

\begin{theorem}[{\cite[Theorem 1, Corollary 12, Remark 19]{Kow07}}]\label{prop:expsumsdef}
  Let $\varphi(x)$ be a formula in one variable in the language of rings such that $|\varphi(\F_\lambda)|$ is not bounded as $|\F_\lambda|\to+\infty$. Then, if $\psi\in\hat\F_\lambda$ is nontrivial, the bound \eqref{eq:boundGaussSumA} for $A=\varphi(\F_\lambda)$ holds with $\alpha(A)=1/2$, with an implied constant depending only on $\varphi$.
  \end{theorem}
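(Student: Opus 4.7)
The plan is to combine quantifier elimination in finite fields (Chatzidakis--van den Dries--Macintyre) with Deligne's generalization of the Riemann Hypothesis, following the strategy already used in the proof of Proposition \ref{prop:boundGaussianSumClassical}.

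First, I would upgrade Theorem \ref{thm:CDM} to a \emph{pointwise} geometric description of the indicator function $\mathbf{1}_A$ of $A=\varphi(\F_\lambda)$. The quantifier-elimination normal form underlying \cite{CDM92} should furnish, depending only on $\varphi$, finitely many morphisms $f_i: X_i\to\A^1$ of finite-type $\Z$-schemes together with a polynomial $F$ such that
\[
\mathbf{1}_A(a)=F\bigl(\#f_1^{-1}(a)(\F_\lambda),\ldots,\#f_k^{-1}(a)(\F_\lambda)\bigr)
\]
for all $a\in\F_\lambda$ outside an exceptional set of size $O_\varphi(1)$. By the Grothendieck--Lefschetz trace formula, each counting function is a trace of Frobenius on a constructible $\overline\Q_\ell$-sheaf of bounded complexity on $\A^1/\F_\lambda$, so substituting into $F$ expresses $\mathbf{1}_A$ as a fixed polynomial combination of trace functions of finitely many constructible sheaves whose generic ranks, numbers of singularities, and Swan conductors are controlled purely by $\varphi$.

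With this reduction in place, $\sum_{x\in A}\psi(x)$ becomes, up to an $O_\varphi(1)$ error, a bounded linear combination of sums of the form $\sum_{x\in\A^1(\F_\lambda)} t_\Gc(x)\psi(x)$ for constructible sheaves $\Gc$ on $\A^1/\F_\lambda$ with invariants bounded in terms of $\varphi$ alone. Tensoring each such $\Gc$ with the Artin--Schreier sheaf $\Lc_\psi$ and invoking Deligne's Weil II together with the Betti-number bounds of \cite[Theorem 12]{KatzBetti01}, exactly as in Proposition \ref{prop:boundGaussianSumClassical}, yields $\bigl|\sum_{x\in A}\psi(x)\bigr|\ll_\varphi|\F_\lambda|^{1/2}$; the nontriviality of $\psi$ rules out the top-degree contribution since tensoring with $\Lc_\psi$ kills any geometrically constant quotient. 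The hypothesis that $|\varphi(\F_\lambda)|$ be unbounded forces the first alternative of Theorem \ref{thm:CDM}, so that $|A|\asymp_\varphi|\F_\lambda|$, and dividing delivers \eqref{eq:boundGaussSumA} with $\alpha(A)=1/2$.

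The main obstacle will be the first step. The asymptotic count in Theorem \ref{thm:CDM} is not by itself sufficient: one needs a pointwise geometric description of $\mathbf{1}_A$ with uniform control of all sheaf-theoretic invariants in $\lambda$, going beyond the form stated in \cite{CDM92} and resting on the finer features of the theory of pseudofinite fields à la Ax, in particular the availability of ``Galois formulas'' as a normal form for definable subsets. Once that description is secured, the $\ell$-adic step is essentially routine, modulo careful bookkeeping to ensure that the ranks, Swan conductors, and numbers of singularities of the auxiliary sheaves depend only on $\varphi$ and not on $\lambda$.
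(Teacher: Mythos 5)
The paper does not prove this statement at all: it is quoted directly from \cite{Kow07} (Theorem 1, Corollary 12, Remark 19), so there is no internal argument to compare with, and the relevant benchmark is Kowalski's proof in that reference. Your outline is essentially a reconstruction of that proof's architecture: upgrade the counting statement of Theorem \ref{thm:CDM} to the ``Galois formula''/Galois stratification normal form from the Ax--Chatzidakis--van den Dries--Macintyre theory of pseudofinite fields, so that $\mathbf{1}_{\varphi(\F_\lambda)}$ is expressed (up to $O_\varphi(1)$ exceptional points) through Frobenius conditions in finitely many covers of $\A^1$ defined over $\Z$ independently of $\lambda$, hence through trace functions of $\ell$-adic sheaves with conductor bounded in terms of $\varphi$ alone; then conclude by Deligne's Weil II and Betti-number bounds as in Proposition \ref{prop:boundGaussianSumClassical}. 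So the strategy is the right one and matches the cited source.

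As a self-contained proof, however, there are genuine gaps. The first is the one you flag yourself, and it is not a technicality but the entire substance of the result: the pointwise geometric description of $\mathbf{1}_A$ with sheaf invariants controlled only by $\varphi$ does not follow from Theorem \ref{thm:CDM} as stated (a cardinality estimate), and establishing it is exactly what \cite{Kow07}, building on \cite{CDM92}, does; your write-up asserts it rather than proves it. Within the $\ell$-adic step, your remark that twisting by $\Lc_\psi$ ``kills any geometrically constant quotient'' is also too weak: to make the $H^2_c$ term vanish one must rule out that a constituent of the auxiliary sheaf is itself geometrically isomorphic to $\Lc_{\overline\psi}$, which is where one uses that the covers come from characteristic $0$ and are tame for large residue characteristic (small $\lambda$ being absorbed into the implied constant). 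Finally, the inference ``$|\varphi(\F_\lambda)|$ unbounded forces the first alternative of Theorem \ref{thm:CDM}, so $|A|\asymp_\varphi|\F_\lambda|$'' is not valid as stated: the dichotomy in Theorem \ref{thm:CDM} is field by field, and unboundedness along one sequence of fields does not prevent $|\varphi(\F_\lambda)|$ from being small (even bounded and nonzero) for other $\lambda$, where dividing by $|A|$ would destroy the bound \eqref{eq:boundGaussSumA}; one needs the one-variable dichotomy (bounded versus positive density) and the accompanying discussion, which is precisely the role of \cite[Remark 19]{Kow07}, to interpret and use the hypothesis correctly.
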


  \subsubsection{Images of polynomials}\label{subsubsec:imagesPolynomials}

  When $\varphi(x)=(\exists y : x=f(y))$ for some polynomial $f\in\Z[X]$, Theorem \ref{thm:CDM} also appears in \cite{BSD59} (using the Weil conjectures for curves).

  \begin{proposition}[{\cite[Theorem 1, Lemma 1]{BSD59}}]\label{prop:BSD}
    Let $f\in\Z[X]$ be of degree $d\ge 2$ and such that the Galois group of $f(X)-y\in\C(y)[X]$ over $\C(y)$ is equal to $\Sf_d$. Then \eqref{eq:CDM1} for $\varphi(x)=(\exists y : x=f(y))$ and a finite field $\F_\lambda$ of characteristic $\ell\gg_f 1$ holds with
    \begin{equation*}
     \label{eq:Cvarphif}
      C(\lambda,\varphi)=\sum_{n=1}^{\deg(f)} \frac{(-1)^{n+1}}{n!}\in(0,1).
    \end{equation*}
  \end{proposition}

  This is extended to $f\in\F_\lambda(X)$ in \cite{Cohen70}.
  
  \begin{remarks}\label{rem:almostAll}
    \begin{enumerate}
    \item See \cite[p. 422]{BSD59} for sufficient conditions to verify the hypothesis of Proposition \ref{prop:BSD}.
    \item By \cite{vdW34} or \cite{Gall73}, the hypothesis of Proposition \ref{prop:BSD} holds for almost all monic $f\in\Z[X]$ of degree $d\ge 2$, with respect to the terminology of Footnote \ref{fn:almostallpol}, p. \pageref{fn:almostallpol}.
    \end{enumerate}
  \end{remarks}
  
\section{Zero-density estimates for trace functions in algebraic subsets}\label{sec:zerodensity}

We continue to fix a number field $E$ with ring of integers $\Oc$.

\subsection{General result}

\begin{proposition}\label{prop:largeSieve}
  Let $\Lambda$ be a set of valuations on $\Oc$ and let $t: \F_q\to E$ be the trace function over $\F_q$ associated to a coherent family $(\Fc_\lambda)_{\lambda\in\Lambda}$ of sheaves of $\Oc_\lambda$-modules over $\F_p$, with monodromy group structure $G$. For $A\subset E$ and $\lambda\in\Lambda$ corresponding to a prime ideal $\lf$ of $\Oc$, we denote by $A_\lambda\subset\F_\lambda$ the reduction of $A\cap\Oc_\lf$ modulo $\lf$. Assume that
  \begin{equation}
    \label{eq:localDensities}
    \sup_{\lambda\in\Lambda} \frac{|A_\lambda|}{|\F_\lambda|}<1.
  \end{equation}
  Then
  \begin{equation}
    \label{eq:largeSieve}
    P\big(t(x)\in A\big)\ll \frac{1}{|\Lambda_L|}\text{ with }L=\floor{q^{\frac{1}{2B}}},
  \end{equation}
  where $B>0$ is as in Theorem \ref{thm:largeSieve}, with an implied constant depending only on the conductor of the family and on the left-hand side of \eqref{eq:localDensities}.
\end{proposition}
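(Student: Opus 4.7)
The plan is to apply Theorem \ref{thm:largeSieve} with the conjugacy-invariant subsets
\[\Omega_\lambda=\{g\in G(\F_\lambda):\tr(g)\notin A_\lambda\},\]
and to choose $L=\floor{q^{1/(2B)}}$ so that the prefactor $1+L^B/q^{1/2}$ is $O(1)$.

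First, I would embed the event $\{t(x)\in A\}$ into the sieve. By \eqref{eq:codomaintLocalization}, $t(x)\in\Oc_\qf$ for every $\lambda\in\Lambda$, so $t(x)\in A$ forces $t(x)\in A\cap\Oc_\qf$, and hence $t(x)\bmod\qf\in A_\lambda$ by definition of the reduction. On the lissity locus $U_{\Fc_\lambda}(\F_q)$ we have $t(x)\bmod\qf=\tr(\rho_\lambda(\Frob_{x,q}))$, so $\rho_\lambda(\Frob_{x,q})\notin\Omega_\lambda$ for every $\lambda\in\Lambda_L$. The finitely many points in $\F_q\setminus U_{\Fc_\lambda}(\F_q)$ contribute only $O(1/q)$, since the number of singularities is uniformly bounded by condition \ref{item:coherentFamily3} of coherence. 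Theorem \ref{thm:largeSieve} then yields
\[P(t(x)\in A)\ll\frac{1}{P(L)}+\frac{1}{q}.\]

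The remaining task is the lower bound $P(L)\gg|\Lambda_L|$, i.e., a uniform positive lower bound on $|\Omega_\lambda|/|G(\F_\lambda)|$ valid for $\lambda$ outside a bounded set. By Proposition \ref{prop:probTrG} applied to $A_\lambda$, combined with the Gaussian sum estimate from Proposition \ref{prop:boundGaussianSumClassicalKim} (which furnishes $\alpha(G)>1$ for each admissible structure group of \ref{item:coherentFamily2} outside a handful of small cases that can be handled by Proposition \ref{prop:boundGaussianSumClassical} or direct inspection) and the trivial Fourier estimate $|\sum_{x\in A_\lambda}\psi(-x)|\le|A_\lambda|\le|\F_\lambda|$, one obtains
\[P(\tr(g)\in A_\lambda)\le\frac{|A_\lambda|}{|\F_\lambda|}+O(|\F_\lambda|^{1-\alpha(G)}).\]
Writing $\sup_\lambda|A_\lambda|/|\F_\lambda|=1-\delta$ with $\delta>0$ given by \eqref{eq:localDensities}, the right-hand side is at most $1-\delta/2$ once $|\F_\lambda|$ exceeds a threshold depending only on $\delta$ and $G$, hence $|\Omega_\lambda|/|G(\F_\lambda)|\ge\delta/2$. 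The finitely many exceptional $\lambda$ shift $|\Lambda_L|$ by $O(1)$, which is absorbed into the implicit constant, and the residual $1/q$ term is dominated by $1/|\Lambda_L|$ since $|\Lambda_L|\ll L\ll q$.

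The main obstacle is precisely this lower bound on $P(L)$: the hypothesis \eqref{eq:localDensities} is purely one of density and provides no control on the Fourier coefficients of $A_\lambda$, so one must rely entirely on cancellation in the Gaussian sum over $G(\F_\lambda)$ being strong enough (exponent strictly greater than $1$) to dominate the crude estimate $|A_\lambda|\le|\F_\lambda|$ — which is exactly what Kim's explicit evaluation of such sums provides for the classical groups forming the structure of a coherent family.
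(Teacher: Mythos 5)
Your argument is correct and follows essentially the same route as the paper's own proof: the same sieving sets $\Omega_\lambda=\{g:\tr(g)\notin A_\lambda\}$, the same application of Theorem~\ref{thm:largeSieve} with $L=\floor{q^{1/(2B)}}$, and the same lower bound on $P(L)$ via Proposition~\ref{prop:probTrG} combined with Kim's bounds (Proposition~\ref{prop:boundGaussianSumClassicalKim}) and the density hypothesis~\eqref{eq:localDensities}. If anything you are slightly more careful than the paper (treating the non-lisse points and the finitely many small residue fields explicitly, and noting that one really wants $\alpha(G)>1$ when only the trivial bound on $\sum_{x\in A_\lambda}\psi(x)$ is used, where the paper just invokes $\alpha(G)\ge 1$); only the parenthetical suggestion that Proposition~\ref{prop:boundGaussianSumClassical} (which gives merely $\alpha=1/2$) could handle the borderline small-rank cases is off, but it is not needed for the groups arising in the applications.
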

\begin{proof}
  For every $\lambda\in\Lambda$, we may reduce $t: \F_q\to\Oc_\lf\le\Oc_\lambda$ to $\tilde t: \F_q\to\F_\lambda$, so that
  \[P(t(x)\in A)\le\frac{|\{x\in\F_q : \tilde t(x)\in A_\lambda\text{ for all }\lambda\in\Lambda_L\}|}{q}.\]
  By Theorem \ref{thm:largeSieve} with
  \[\Omega_\lambda=\{g\in G(\F_\lambda) : \tr{g}\not\in A_\lambda\} \hspace{0.3cm} (\lambda\in\Lambda),\]
  which are clearly conjugacy-invariant, we get
  \[P(t(x)\in A)\ll\left(1+\frac{L^B}{q^{1/2}}\right)\frac{1}{P(L)},\]                
  where $P(L)=\sum_{\lambda\in\Lambda_L} P(\tr(g)\not\in A_\lambda)$. By \eqref{eq:probTrG} (Proposition \ref{prop:probTrG}),
  \[P(\tr(g)\in A_\lambda)=\frac{|A_\lambda|}{|\F_\lambda|}\left(1+O \left(\frac{1}{|\F_\lambda|^{\alpha(G)+\alpha(A_\lambda)-1}}\right)\right)\ll\frac{|A_\lambda|}{|\F_\lambda|},\]
  since $\alpha(G)\ge 1$ by Proposition \ref{prop:boundGaussianSumClassicalKim}. Therefore, we get that for any $L\ge 1$,
  \[P\big(t(x)\in A\big)\ll \left(1+\frac{L^B}{q^{1/2}}\right)|\Lambda_L|^{-1}\left(1-\max_{\lambda\in\Lambda_L} \frac{|A_\lambda|}{|\F_\lambda|}\right)^{-1}.\]
\end{proof}

\begin{remark}
  If we assume more generally that the monodromy group of $\widetilde\Fc_\lambda$ is $\bs G(\F_\lambda)$ for $\bs G\le\GL_n$ any linear group over $\F_\lambda$, the results hold if $\alpha(A_\lambda)\ge 1/2$ for all $\lambda\in\Lambda$, by Proposition \ref{prop:boundGaussianSumClassical}. Interestingly, in the case of $\SL_n$, $\Sp_{2n}$ and $\SO_n^\pm$, Proposition \ref{prop:boundGaussianSumClassicalKim} gives much more cancellation, so that we do not need information about the $\alpha(A_\lambda)$.
\end{remark}

To apply Proposition \ref{prop:largeSieve}, we need the local densities assumption \eqref{eq:localDensities} and lower bounds on $|\Lambda_L|$. We treat these aspects in the following subsections.

\subsection{Lower bounds on $|\Lambda_L|$}\label{subsec:lowerBoundsLambdaL}

For our applications, we will mainly consider $\Lambda$ to be either:
\begin{examples}\label{ex:Lambda}
  \begin{enumerate}
  \item\label{item:exLambdaI} The full set $\Lambda_{0,p}$ of valuations on $\Oc$ not lying above the $p$-adic valuation.
  \item\label{item:exLambdaII} For $m\ge 2$ and $C\subset(\Z/m)^\times$, the set of valuations $\lambda\in\Lambda_{0,p}$ such that $|\F_\lambda|\in C$.
  \item The restriction of these to ideals having degree $1$ over $\Q$.
  \end{enumerate}
\end{examples}

More generally, let $F/E$ be a fixed finite Galois extension of number fields with Galois group $H$, $C\subset H$ be a conjugacy-stable subset, and
  \begin{eqnarray}
    \label{eq:chebotarev}
    \Lambda(C)&=&\{\lf\normal E\text{ prime, not ramified in } F : \Frob_\lf\in C\}\\
    \Lambda_1(C)&=&\{\lf\in\Lambda(C) \text{ of degree 1 over }\Q\}\nonumber.
    \nonumber
  \end{eqnarray}
  Example \ref{ex:Lambda} \ref{item:exLambdaI} then corresponds to $E=F$, while \ref{item:exLambdaII} corresponds to $F=E(\zeta_m)$ with $H\cong(\Z/m)^\times$.

By Chebotarev's density theorem, if $E$ and $F$ are fixed,
\begin{equation}
  \label{eq:ChebLowerBound}
  |\Lambda(C)_L|\ge |\Lambda_1(C)_L|\gg \frac{|C|}{|H|}\frac{L}{\log{L}} \hspace{0.5cm} (L\to+\infty)
\end{equation}
with an absolute implied constant. Hence, if $F$ and $E$ do not depend on $p$, \eqref{eq:largeSieve} is
\begin{equation}
  \label{eq:boundLargeSieveCheb}
  P\big(t(x)\in A\big)\ll_{C,H} \frac{\log{q}}{B q^{1/(2B)}}\to 0 \hspace{0.5cm} (q=p^e\to+\infty).
\end{equation}
If $E$ and/or $F$ depend on $p$ (e.g. for Kloosterman sums, where $E=\Q(\zeta_{4p})$), we must either fix $p$ or deal with uniformity with respect to $E$ and $F$. We discuss this situation in the following paragraphs.

\subsubsection{Uniformity in the prime ideal theorem} By \cite{Fried80} (extending Chebychev's method to number fields), if $E/\Q$ is normal\footnote{In Friedlander's paper, it is only assumed that $E$ is in a tower of normal extensions. If $E/\Q$ is itself normal, we can improve the result by using more a precise version of Stark's estimates \cite{Sta74} on the residue at $1$ of the Dedekind zeta function of $E$.}, then
\[\pi_E(L)=|\{\lf\normal E\text{ prime} : N(\lf)\le L\}|\gg_\varepsilon \frac{L}{\log(2L)^{1+\varepsilon}\Delta_E^{1/2+\varepsilon}}\]
for $\Delta_E=|\disc_\Q(E)|$, and any $\varepsilon>0$ if $n_E=[E:\Q]\gg_\varepsilon 1$. This is nontrivial only when $L\gg \Delta_E^{1/2+\varepsilon'}$ for some $\varepsilon'>0$.
\subsubsection{Uniformity in Chebotarev's density theorem}\label{subsubsec:uniformityCheb}
The unconditional results due to Lagarias--Odlyzko and Serre (see \cite[Section 2.2]{Ser81}) show that \eqref{eq:ChebLowerBound} holds with an absolute implied constant under the restriction $\log{L}\gg n_E(\log\Delta_E)^2$.

Assuming the generalized Riemann hypothesis (GRH) for the Dedekind zeta function of $E$, this range can be improved to $L\gg(\log\Delta_E)^{2+\varepsilon}$ for an arbitrary $\varepsilon>0$ (see \cite[Section 2.4]{Ser81}).

\subsubsection{Cyclotomic fields}

If $E=\Q(\zeta_d)$, $F=E(\zeta_m)$ are cyclotomic fields, it is possible to improve the unconditional uniform range in Chebotarev's density theorem by relying on estimates for primes in arithmetic progressions.
\begin{proposition}\label{prop:chebotarevCyclotomic}
    For $d,m\ge 1$ coprime integers, let $E=\Q(\zeta_d)$ and $F=E(\zeta_m)$. For $C\subset\Gal(F/E)\cong(\Z/m)^\times$, we have
    \[|\Lambda(C)_L|\ge |\Lambda_1(C)_L|\gg\frac{|C|L}{(dm)^\varepsilon\varphi(m)\log{L}}\]
    when either:
    \begin{enumerate}
    \item \label{item:prop:chebotarevCyclotomic1} $\varepsilon>0$ and $L\ge (dm)^8$, or
    \item \label{item:prop:chebotarevCyclotomic2} under GRH, $\varepsilon=0$ and $L\ge (dm)^{2+\varepsilon'}$ for some $\varepsilon'>0$.
    \end{enumerate}
\end{proposition}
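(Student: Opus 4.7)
The plan is to reduce the count defining $\Lambda_1(C)_L$ to a classical count of rational primes in arithmetic progressions modulo $dm$, to which uniform forms of the prime number theorem apply. Since $(d,m)=1$ we have $F=\Q(\zeta_{dm})$, and $\Gal(F/E)$ identifies with the kernel of $(\Z/dm)^\times\twoheadrightarrow(\Z/d)^\times$, hence with $(\Z/m)^\times$ via reduction mod $m$. A degree-$1$ prime $\qf$ of $E=\Q(\zeta_d)$ lies above a rational prime $\ell$ that splits completely in $E$, equivalently $\ell\equiv 1\pmod d$; for such $\ell\nmid m$, each of the $\varphi(d)$ primes $\qf\mid\ell$ is unramified in $F$ and has $\Frob_\qf$ equal to the class of $\ell\bmod m$. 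By the Chinese Remainder Theorem, the combined conditions $\ell\equiv 1\pmod d$ and $\ell\bmod m\in C$ are equivalent to $\ell\bmod dm\in S$ for a subset $S\subset(\Z/dm)^\times$ with $|S|=|C|$, so that
\[
|\Lambda_1(C)_L|=\varphi(d)\sum_{a\in S}\pi(L;dm,a)+O(\omega(m)),
\]
the error accounting for the finitely many primes dividing $m$.

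Using $\varphi(dm)=\varphi(d)\varphi(m)$, the proposition reduces to the uniform lower bound
\[
\pi(L;q,a)\gg\frac{L}{q^\varepsilon\,\varphi(q)\,\log L},\qquad (a,q)=1,
\]
for $q=dm$ and $L$ in the respective ranges; summing over $a\in S$ and multiplying by $\varphi(d)$ then yields the claim. Under GRH for Dirichlet $L$-functions modulo $q$ (part \ref{item:prop:chebotarevCyclotomic2}), this is immediate from the standard explicit formula
\[
\pi(L;q,a)=\frac{\mathrm{Li}(L)}{\varphi(q)}+O\bigl(L^{1/2}\log^2(qL)\bigr),
\]
whose main term dominates as soon as $L\ge q^{2+\varepsilon'}$, giving the bound with $\varepsilon=0$.

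For the unconditional part \ref{item:prop:chebotarevCyclotomic1}, I would invoke a uniform Linnik-type lower bound for primes in arithmetic progressions, as given by Maynard \cite{May13}, or alternatively deduced from Heath-Brown's log-free zero-density estimates for Dirichlet $L$-functions combined with Siegel's ineffective bound $1-\beta\ge c(\varepsilon)q^{-\varepsilon}$ on the possible exceptional real zero of an $L(s,\chi)$ modulo $q$. This yields the displayed lower bound for $L\ge q^C$ with any $C$ exceeding the Linnik exponent (currently $\le 5.2$ after Xylouris), so that the hypothesis $L\ge(dm)^8$ comfortably applies. The main obstacle is really only this uniform handling of the possible exceptional (Siegel) zero, which is the sole source of the $(dm)^\varepsilon$ loss in the statement; the remaining steps amount to a bookkeeping translation between primes of $E$ and rational primes in arithmetic progressions modulo $dm$.
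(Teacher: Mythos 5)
Your proposal is correct and follows essentially the same route as the paper: restrict to degree-one primes of $E$, which correspond to rational primes $\ell\equiv 1\pmod d$ each contributing $\varphi(d)$ prime ideals with Frobenius determined by $\ell\bmod m$, combine the congruences via CRT into classes modulo $dm$, and conclude with the uniform lower bound for $\pi(L;dm,a)$ from Maynard's Linnik-type theorem (unconditionally, $L\ge(dm)^8$) or from the explicit formula under GRH ($L\ge(dm)^{2+\varepsilon'}$).
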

\begin{proof}
  Since every unramified rational prime $\ell$ of inertia/residual degree $f_\ell$ (equal to the order of $\ell$ in $(\Z/d)^\times$) gives rise to $\varphi(d)/f_\ell$ prime ideals with norm $\ell^{f_\ell}$,
  \begin{eqnarray*}
    |\Lambda(C)_L|&=&\varphi(d)\sum_{f\mid \varphi(d)}\frac{|\{\ell\le L^{1/f}\text{ prime } : \ell\nmid\Delta_E, \, f_\ell=f, \ \ell^f\in C\}|}{f}.
  \end{eqnarray*}
  The summand with $f=1$ gives:
  \begin{eqnarray*}
    |\Lambda_1(C)_L|&\ge& \varphi(d)|\{\ell\le L\text{ prime } : \ell\nmid\Delta_E, \ \ell\equiv 1\pmod{d}, \ \ell\in C\}|.
  \end{eqnarray*}
  If $(d,m)=1$, then by the Chinese remainder theorem
  \[|\Lambda_1(C)_L|\ge\varphi(d)\left[\sum_{c\in C}\pi(c,dm,L)-\omega(d)\right],\]
  where $\pi(a,d,L)=|\{\ell\le L\text{ prime } : \ell\equiv a\pmod{d}\}|$ for $a\in(\Z/d)^\times$. Uniformly, one has
\begin{equation}
  \label{eq:DirichletExpectUniform}
  \pi(a,d,L)\gg \frac{L}{\varphi(d)d^{\varepsilon}\log{L}}
\end{equation}
under \ref{item:prop:chebotarevCyclotomic1} (by \cite[Theorem 3.3]{May13}, using Linnik-type arguments) or \ref{item:prop:chebotarevCyclotomic2} assuming GRH.
\end{proof}

\begin{remark}\label{rem:primesDeg1Density}
  Similarly, this shows that for a Galois extension $E/\Q$, the set of prime ideals with inertia degree $1$ has natural density $1$, so we cannot hope to substantially improve the lower bound by taking into account the $f>1$ in the proof of Proposition \ref{prop:chebotarevCyclotomic}.
\end{remark}

\begin{remarks}\label{rem:montgomery}
  \begin{enumerate}
  \item By the Bombieri--Vinogradov theorem, the range \ref{item:prop:chebotarevCyclotomic2} in \eqref{eq:DirichletExpectUniform} holds unconditionally for all $a$ on average over $d$.
  \item By a conjecture of Montgomery, one may be able to take $\varepsilon=0$ and $L\gg(dm)^{1+\delta}$ for any $\delta>0$. By Barban--Davenport--Halberstam, Montgomery, and Hooley, this holds true in \eqref{eq:DirichletExpectUniform} on average over $d$ and $a$.
  \end{enumerate}
\end{remarks}

\subsection{Explicit zero-density estimates}
The results from the previous section along with Proposition \ref{prop:largeSieve} give:
\begin{proposition}\label{prop:largeSieveExplicit}
  Under the hypotheses of Proposition \ref{prop:largeSieve} and \eqref{eq:localDensities}, with $E/\Q$ normal, $F/E$ a finite Galois extension with Galois group $H$, a conjugacy-invariant subset $C\subset H$ and $\Lambda=\Lambda(C)$ or $\Lambda_1(C)$ as in \eqref{eq:chebotarev}, we have that for any $\varepsilon>0$:
  \begin{enumerate}
  \item If $F=E$ is normal,
    \[P\big(t(x)\in A\big)\ll_\varepsilon \frac{\Delta_E^{1/2+\varepsilon}(\log{q})^{1+\varepsilon}}{B^{1+\varepsilon}q^{1/(2B)}},\]
    which is nontrivial when $\Delta_E^{B+\varepsilon'}=o(q)$ for some $\varepsilon'>0$.
  \item Under GRH, if $q\ge(\log\Delta_E)^{2B+\varepsilon}$,
    \[P\big(t(x)\in A\big)\ll_\varepsilon\frac{m\log{q}}{|C|Bq^{1/(2B)}}\ll_{m,C} \frac{\log{q}}{B q^{1/(2B)}}.\]
  \item Assume that $E=\Q(\zeta_d)$ and $F=E(\zeta_m)$ with $(d,m)=1$. If $q\ge (dm)^{16B}$, then
    \begin{equation*}
      P\big(t(x)\in A\big)\ll_\varepsilon\frac{m(dm)^\varepsilon\log{q}}{|C|B q^{1/(2B)}}\ll_{m,C} \frac{d^\varepsilon\log{q}}{B q^{1/(2B)}}.
    \end{equation*}
  \end{enumerate}
  The implied constants depend only on the conductor of the family and the quantities indicated.
\end{proposition}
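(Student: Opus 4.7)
The plan is to combine Proposition \ref{prop:largeSieve}---which with the choice $L = \lfloor q^{1/(2B)}\rfloor$ reduces the estimate to
\[ P\big(t(x) \in A\big) \ll 1/|\Lambda_L| \]
---with the three uniform density-theorem results recalled in the preceding paragraphs, one per case. Thus in every part of the proof I would simply substitute for $|\Lambda_L|$ the best available lower bound and then translate the relevant range hypothesis from $L$ back to $q = p^e$ via $L \asymp q^{1/(2B)}$, keeping track of the $\log L = \log q/(2B) + O(1)$ conversion.

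For (1), since $F = E$ the set $\Lambda(C)$ is the full set of primes of $E$, and Friedlander's refinement of the prime ideal theorem for normal $E/\Q$ gives $\pi_E(L) \gg_\varepsilon L/(\log(2L)^{1+\varepsilon} \Delta_E^{1/2+\varepsilon})$; substituting $L = \lfloor q^{1/(2B)}\rfloor$ produces the claimed bound, and the nontriviality condition $\Delta_E^{B+\varepsilon'} = o(q)$ is just the requirement that $L$ fall into Friedlander's nontrivial range $L \gg \Delta_E^{1/2 + \varepsilon'}$. For (2), I would apply \eqref{eq:ChebLowerBound} together with Serre's GRH-conditional range $L \gg (\log \Delta_E)^{2 + \varepsilon}$; writing $L = \lfloor q^{1/(2B)} \rfloor$, that range is guaranteed by the stated hypothesis $q \ge (\log \Delta_E)^{2B + \varepsilon}$ after rescaling $\varepsilon$, and absorbing $|H| \le m$ into the constant yields the displayed estimate.

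For (3) I would invoke Proposition \ref{prop:chebotarevCyclotomic}(i) directly: its range condition $L \ge (dm)^8$ is exactly the statement's hypothesis $q \ge (dm)^{16B}$, and inverting its conclusion $|\Lambda(C)_L| \gg |C| L/((dm)^\varepsilon \varphi(m) \log L)$, together with $\varphi(m) \le m$, gives the stated bound. The entire argument is bookkeeping on top of Proposition \ref{prop:largeSieve} and the Chebotarev/prime-ideal inputs, so there is no genuine technical obstacle; the thing to watch is that $\varepsilon$ is rescaled consistently when passing between the $L$-regime of the Chebotarev statements and the $q$-regime here, and that the power of $\log q$ matches the power of $\log L$ from the corresponding density theorem.
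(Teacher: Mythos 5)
Your proposal is correct and is exactly the argument the paper leaves implicit: Proposition \ref{prop:largeSieveExplicit} is pure bookkeeping, substituting into \eqref{eq:largeSieve} with $L=\floor{q^{1/(2B)}}$ the Friedlander bound for $\pi_E(L)$ in case (1), the GRH-conditional range for \eqref{eq:ChebLowerBound} in case (2), and Proposition \ref{prop:chebotarevCyclotomic} in case (3), then converting $\log L\asymp\log q/B$ and absorbing $\varphi(m)\le m$ and $|H|/|C|\le m\le m|C|$ into the displayed constants. The only point to be careful about is your claim in case (2) that the hypothesis $q\ge(\log\Delta_E)^{2B+\varepsilon}$ yields Serre's range ``after rescaling $\varepsilon$'': since $L\asymp q^{1/(2B)}$, the requirement $L\gg(\log\Delta_E)^{2+\varepsilon}$ translates into $q\gg(\log\Delta_E)^{4B+2B\varepsilon}$, and no rescaling of an arbitrarily small $\varepsilon$ turns the exponent $2B+\varepsilon$ into $4B$; this discrepancy lies in the stated hypothesis (which should be read with exponent $4B+\varepsilon$, or as a condition on $L$ itself) rather than in your method, but it should be acknowledged rather than smoothed over.
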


\subsubsection{The case $E=\Q(\zeta_{4p})$}
For exponential sums, we are interested in the case $E=\Q(\zeta_{4p})$, where $n_E=2(p-1)$ and $\Delta_E=4^{2p-3}p^{2(p-2)}$.

The restrictions $q\gg g(E)$ (for some $g(E)=g(n_E,\Delta_E)\ge 1$) of Proposition \ref{prop:largeSieveExplicit} impose limitations on the range of $e,p$ when $q=p^e\to+\infty$:
\begin{corollary}\label{cor:largeSieveExplicitQzeta4p}
  Under the hypotheses of Proposition \ref{prop:largeSieveExplicit} for $E=\Q(\zeta_{4p})$ and $F=E(\zeta_m)$ with $(m,4p)=1$, we have
  \[P\big(t(x)\in A\big)\ll_\varepsilon\frac{m(pm)^\varepsilon\log{q}}{|C|Bq^{1/(2B)}}\ll_{m,C} \frac{p^\varepsilon\log{q}}{B q^{1/(2B)}}\to 0 \hspace{0.5cm} (q=p^e\to+\infty)\]
  when either
  \begin{center}
      \begin{enumerate*}
      \item $\varepsilon>0$ and $e\ge 16B$, or
      \item under GRH, $\varepsilon=0$ and $e>4B$.
      \end{enumerate*}
    \end{center}
    The implied constants depend only on the conductor of the family and the quantities indicated.
\end{corollary}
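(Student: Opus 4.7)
The plan is to specialize Proposition \ref{prop:largeSieveExplicit} to $E=\Q(\zeta_{4p})$ and $F=\Q(\zeta_m)$, so that $d=4p$ in parts (2) and (3) of that proposition; the hypothesis $(4p,m)=1$ is exactly the coprimality condition required, and the arithmetic data for $E=\Q(\zeta_{4p})$ (namely $n_E=2(p-1)$ and $\Delta_E=4^{2p-3}p^{2(p-2)}$, so $\log\Delta_E\asymp p\log p$) guarantees that none of the unavoidable loss in those statements is worse than a power of $p$.

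For the unconditional estimate, I would invoke Proposition \ref{prop:largeSieveExplicit}(3), whose numerical hypothesis $q\ge(dm)^{16B}=(4pm)^{16B}$ rewrites, for $q=p^e$, as $p^{e-16B}\ge (4m)^{16B}$. Granted $e\ge 16B$, this fails only for primes $p$ in a finite set depending on $m$ and $B$, which one absorbs into an $(m,C)$-dependent implicit constant (legitimate because $P(t(x)\in A)\le 1$ trivially). Once the hypothesis holds, Proposition \ref{prop:largeSieveExplicit}(3) directly yields the bound with a factor $(4pm)^\varepsilon$; the identity $(4pm)^\varepsilon=4^\varepsilon m^\varepsilon p^\varepsilon$ produces both displayed forms of the estimate after absorbing $m$, $|C|$, and $4^\varepsilon$ into the constant.

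For the GRH estimate, I would run the same argument through Proposition \ref{prop:largeSieveExplicit}(2), equivalently Proposition \ref{prop:chebotarevCyclotomic}(2), whose hypothesis in the large-sieve setup reads $L=\lfloor q^{1/(2B)}\rfloor\ge(4pm)^{2+\varepsilon'}$ for some $\varepsilon'>0$, i.e.\ $q\ge(4pm)^{(4+2\varepsilon')B}$. Here the assumption $e>4B$ is strict, so one may choose $\varepsilon'>0$ small enough that $(4+2\varepsilon')B<e$, and then the hypothesis again holds outside a finite set of $p$ absorbed into the implicit constant.

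I do not expect a real obstacle: the corollary is bookkeeping, specializing Proposition \ref{prop:largeSieveExplicit} to the cyclotomic field $\Q(\zeta_{4p})$ and tracking how the $p$-dependence propagates. The only mild subtlety is the trade-off between the arithmetic hypothesis $q\ge(4pm)^{16B}$ and the cleaner stated form $e\ge 16B$, handled by allowing the implicit constant to depend on $m$ and $C$; finally one notes that $\frac{p^\varepsilon\log q}{Bq^{1/(2B)}}=\frac{e\log p}{Bp^{e/(2B)-\varepsilon}}\to 0$ as $q=p^e\to+\infty$ in the allowed range, since $e/(2B)\ge 8>\varepsilon$ (resp.\ $e/(2B)>2\ge\varepsilon$ under GRH).
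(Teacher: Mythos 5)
Your proposal is correct and is essentially the paper's (implicit) argument: the corollary is obtained exactly as you do it, by specializing Proposition \ref{prop:largeSieveExplicit} (equivalently Proposition \ref{prop:chebotarevCyclotomic}) to $d=4p$, $F=\Q(\zeta_m)$ with $(m,4p)=1$, and absorbing the bounded set of exceptional pairs $(p,e)$ into the $(m,C)$-dependent constant. The only caveat is the exact boundary $e=16B$, where the hypothesis $q\ge(4pm)^{16B}$ fails for \emph{every} $p$ rather than finitely many, so your absorption step does not literally cover it — but this looseness is already present in the paper's own statement and does not reflect a defect specific to your argument.
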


\begin{remarks}\label{rem:methodExtende1}
  \begin{enumerate}
  \item Had we not taken advantage of the fact that $E$ is a cyclotomic field, the unconditional results mentioned in Section \ref{subsubsec:uniformityCheb} would have forced to take $q=p^e\to+\infty$ with $e\gg p$.
  \item \label{rem:item:methodExtende1} Under Montgomery's conjecture mentioned in Remarks \ref{rem:montgomery}, we may take $\varepsilon=0$ and $e>2B$. Without an improvement in the error term of the large sieve bound \eqref{eq:largeSieve}, $e=2B+1\ge 10$ is the minimal value the method could handle.
  \end{enumerate}
\end{remarks}

\subsection{Local densities}\label{subsec:localDensities}

In this section, we finally give examples of sets $A\subset E$ for which the local densities assumption \eqref{eq:localDensities} holds.

\subsubsection{Powers/finite index subgroups}

\begin{proposition}\label{prop:powersFiniteIndex}
  Let $E$, $\Oc$ be as in Proposition \ref{prop:largeSieve} and for $m\ge 2$, let
  \[\Lambda=
    \begin{cases}
      \{\lambda\in\Lambda_{0,p} : |\F_\lambda|\equiv 1\pmod{m}\}&: m\text{ odd}  \\
      \{\lambda\in\Lambda_{0,p} : \text{not lying above }2\}&: m\text{ even}.
    \end{cases}\]
  Then \eqref{eq:localDensities} holds for $A=E^m\subset E$.
\end{proposition}
\begin{proof}
  We have $A_\lambda=\F_\lambda^m$ and for $|\F_\lambda|\ge 3$,
  \begin{eqnarray*}
    \frac{|A_\lambda|}{|\F_\lambda|}&=&\left(1-\frac{1}{|\F_\lambda|}\right)\frac{1}{(|\F_\lambda^\times|,m)}+\frac{1}{|\F_\lambda|}\ll
                                        \begin{cases}
                                          \frac{1}{m}+\frac{1}{|\F_\lambda|}&: m\text{ odd}\\
                                          \frac{1}{2}+\frac{1}{|\F_\lambda|}&: m\text{ even}.
                                        \end{cases}
  \end{eqnarray*}
\end{proof}
Note that the set $\Lambda$ in Proposition \ref{prop:powersFiniteIndex} is of the form given in Example \ref{ex:Lambda} \ref{item:exLambdaII}.
\subsubsection{Definable subsets}
\begin{proposition}\label{prop:definableSubsets}
  Let $E$, $\Oc$ and $\Lambda$ be as in Proposition \ref{prop:largeSieve} and let $\varphi(x)$ be a first order formula in one variable in the language of rings such that:
  \begin{enumerate}
  \item\label{item:cor:definableSubsets1} Neither $|\varphi(\F_\lambda)|$ nor $|\neg\varphi(\F_\lambda)|$ are bounded as $|\F_\lambda|\to+\infty$, where $\neg$ denotes negation.
  \item\label{item:cor:definableSubsets2} For every $\lambda\in\Lambda$ corresponding to an ideal $\lf$, $\varphi(E)\cap\Oc_\lf\pmod{\lf}$ is contained in $\varphi(\F_\lambda)$.
  \end{enumerate}
  Then \eqref{eq:localDensities} holds with $A=\varphi(E)\subset E$.
\end{proposition}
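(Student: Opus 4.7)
The plan is to reduce the claim to a uniform positive lower bound on the density of $\neg\varphi$, which is then supplied by Theorem~\ref{thm:CDM}. By hypothesis~\ref{item:cor:definableSubsets2} we have $A_\lambda\subseteq\varphi(\F_\lambda)$, and consequently
\[
\frac{|A_\lambda|}{|\F_\lambda|} \le 1-\frac{|\neg\varphi(\F_\lambda)|}{|\F_\lambda|}.
\]
Thus it is enough to exhibit a constant $c_0>0$, independent of $\lambda$, such that $|\neg\varphi(\F_\lambda)|/|\F_\lambda|\ge c_0$ for all $\lambda\in\Lambda$ with at most finitely many exceptions.

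Since $\neg\varphi$ is again a first-order formula in one variable, Theorem~\ref{thm:CDM} applies to it and provides a finite set $C(\neg\varphi)\subset(0,1]\cap\Q$ with the usual dichotomy: either $|\neg\varphi(\F_\lambda)|=C(\lambda,\neg\varphi)|\F_\lambda|+O(|\F_\lambda|^{1/2})$ with $C(\lambda,\neg\varphi)\in C(\neg\varphi)$, or $|\neg\varphi(\F_\lambda)|$ is of size at most $O(|\F_\lambda|^{1/2})$. Hypothesis~\ref{item:cor:definableSubsets1} applied to $\neg\varphi$ forces the second alternative to be excluded as soon as $|\F_\lambda|$ exceeds a threshold depending only on $\varphi$. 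Setting $c_0 := \min C(\neg\varphi)>0$, one obtains
\[
\frac{|\neg\varphi(\F_\lambda)|}{|\F_\lambda|} \ge c_0 - O(|\F_\lambda|^{-1/2}) \ge \frac{c_0}{2}
\]
for all $\lambda$ with $|\F_\lambda|$ large enough, and therefore $|A_\lambda|/|\F_\lambda| \le 1 - c_0/2$ on a cofinite subset of $\Lambda$.

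The only remaining point is technical: \eqref{eq:localDensities} asks for a strict supremum, so one must handle the finitely many $\lambda\in\Lambda$ with $|\F_\lambda|$ below the threshold. For these, it suffices either to shrink $\Lambda$ (which is harmless for the large-sieve asymptotics, since $|\Lambda_L|$ is governed by its behavior as $L\to\infty$), or to check directly that $A_\lambda\neq\F_\lambda$; the latter is automatic whenever $\neg\varphi(\F_\lambda)\neq\emptyset$, in which case the trivial bound $|A_\lambda|/|\F_\lambda|\le 1-1/|\F_\lambda|$ yields a gap away from $1$. Taking the maximum over this finite set together with the asymptotic bound $1-c_0/2$ produces a uniform upper bound strictly less than~$1$, establishing \eqref{eq:localDensities}. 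The main (and rather mild) obstacle is really just this passage from an asymptotic to a uniform statement; the conceptual content is the one-line application of \cite{CDM92} to the negation $\neg\varphi$.
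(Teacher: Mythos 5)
Your argument is correct and essentially the paper's own: the paper likewise combines hypothesis \ref{item:cor:definableSubsets2} (giving $A_\lambda\subset\varphi(\F_\lambda)$) with Theorem~\ref{thm:CDM} and hypothesis \ref{item:cor:definableSubsets1}, applying the theorem directly to $\varphi$ to get $\limsup_{|\F_\lambda|\to+\infty}|A_\lambda|/|\F_\lambda|\le\max C(\varphi)<1$, which is just the complementary form of your application to $\neg\varphi$. Your closing care about the finitely many $\lambda$ with $|\F_\lambda|$ small corresponds to the paper's tacit passage from this $\limsup$ statement to \eqref{eq:localDensities} (cf.\ the cofinite restriction of $\Lambda$ in Example~\ref{ex:definablePolynomials}), so no substantive difference in method.
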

\begin{proof}
  Condition \ref{item:cor:definableSubsets2} implies that $A_\lambda\subset\varphi(\F_\lambda)$ for all $\lambda\in\Lambda$. Under condition \ref{item:cor:definableSubsets1}, Theorem \ref{thm:CDM} shows that
  \begin{eqnarray*}
    |\varphi(|\F_\lambda|)|&=&C_{\lambda,\varphi}|\F_\lambda|(1+o(1))\\
    |\neg\varphi(|\F_\lambda|)|&=&C_{\lambda,\neg\varphi}|\F_\lambda|(1+o(1))\\
                           &=&(1-C_{\lambda,\varphi})|\F_\lambda|(1+o(1))
  \end{eqnarray*}
  with $C_{\lambda,\varphi},C_{\lambda,\neg\varphi}\in (0,1]$. Hence, $C_{\lambda,\varphi}\neq 0,1$ for $|\F_\lambda|$ large enough, and $\limsup_{|\F_\lambda|\to+\infty}\frac{|A_\lambda|}{|\F_\lambda|}\le\limsup_{|\F_\lambda|\to+\infty}\frac{|\varphi(\F_\lambda)|}{|\F_\lambda|} \le \max C(\varphi)<1$, recalling that $C(\varphi)$ is finite.
\end{proof}
\begin{remark}\label{rem:definableSubsets}~
  Condition \ref{item:cor:definableSubsets2} of Proposition \ref{prop:definableSubsets} holds if both
  \begin{multicols}{2}
  \begin{enumerate}[(a)]
  \item\label{item:rem:definableSubsetsa} $\varphi(E)\cap\Oc_\lf\subset\varphi(\Oc_\lf)$, \hspace{0.5cm}and
  \item\label{item:rem:definableSubsetsb} $\varphi(\Oc_\lf)\pmod{\lf}\subset\varphi(\F_\lambda)$
  \end{enumerate}
  \end{multicols}
  \noindent hold. Note that:
  \begin{itemize}
  \item Condition \ref{item:rem:definableSubsetsa} holds when $\car(\F_\lambda)\gg_f 1$ if $\varphi(x)=(\exists y : x=f(y))$ for some $f\in\Z[X]$. Indeed, for $x\in E$, we have $\lambda(f(x))=\min(0, \deg(f)\lambda(x))$ if no coefficient of $f$ is divisible by $\car(\F_\lambda)$.
  \item Condition \ref{item:rem:definableSubsetsb} holds if $\varphi$ contains no negations or implications. On the other hand, for $\varphi(x)=\neg (\exists y : x=y^2)$, the reduction of a nonsquare in $\Oc$ may be a square in $\F_\lambda$.
  \end{itemize}
\end{remark}

\begin{example}[Images of polynomials]\label{ex:definablePolynomials}
  Consider the case $\varphi(x)=(\exists y : x=f(y))$ for $f\in\Z[X]$ of Section \ref{subsubsec:imagesPolynomials}. Then Proposition \ref{prop:definableSubsets} applies for
  \begin{itemize}
  \item almost all monic $f$ of fixed degree $d\ge 2$ (with respect to the terminology of Footnote \ref{fn:almostallpol}, p. \pageref{fn:almostallpol}), and
  \item all $f$ satisfying the Galois group condition of Proposition \ref{prop:BSD},
  \end{itemize}
  up to restricting to a cofinite subset of $\Lambda$. Indeed:
  \begin{itemize}
  \item By Proposition \ref{prop:BSD} and Remarks \ref{rem:almostAll}, Condition \ref{item:cor:definableSubsets1} of Proposition \ref{prop:definableSubsets} holds for almost all monic $f$ of fixed degree.
  \item Condition \ref{item:cor:definableSubsets2} holds if $\car(\F_\lambda)\gg_f 1$ by Remark \ref{rem:definableSubsets}.
  \end{itemize}
\end{example}
\section{Examples}\label{sec:examples}

\subsection{Kloosterman sums}

Proposition \ref{prop:LSKlPower}, given in the introduction, now follows directly from Corollary \ref{cor:largeSieveExplicitQzeta4p} with Proposition \ref{prop:Klc} and the local densities estimates from Proposition \ref{prop:powersFiniteIndex}.\\

Similarly, replacing the latter with Proposition \ref{prop:definableSubsets}, we obtain:
\begin{proposition}\label{prop:definableSubsetsKl}
  Let $\varphi(x)$ be a first-order formula in the language of rings as in Proposition \ref{prop:definableSubsets}. Then, for $n\ge 2$ and $\varepsilon>0$,
  \begin{equation}
    \label{eq:definableSubsetsKl}
    P\Big(\Kl_{n,q}(x)\in\varphi(\Q(\zeta_{4p}))\Big)\ll_{n,\varphi,\varepsilon} \frac{p^\varepsilon\log{q}}{B_nq^{1/(2B_n)}}\to 0
  \end{equation}
  when $q=p^e\to+\infty$ coprime to $n$ with $e\ge 16B_n$, for $B_n$ as in \eqref{eq:Bn}. The implied constant depends only on $n$, $\varphi$ and $\varepsilon$.
\end{proposition}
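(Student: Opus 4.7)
The plan is to derive this as a direct application of Corollary \ref{cor:largeSieveExplicitQzeta4p} to the coherent family of Kloosterman sheaves furnished by Proposition \ref{prop:Klc}, with the set $A = \varphi(\Q(\zeta_{4p}))$, exactly as was done in the parallel deduction of Proposition \ref{prop:LSKlPower} from Proposition \ref{prop:powersFiniteIndex}. I would begin by fixing $n \ge 2$ and setting $t = t_{\Klc_n, q} = \Kl_{n,q}$, $E = \Q(\zeta_{4p})$, and taking $(\Fc_\lambda)_{\lambda \in \Lambda_n} = (\Klc_n)_{\lambda \in \Lambda_n}$. Proposition \ref{prop:Klc} supplies the coherent-family hypothesis with monodromy group structure $G \in \{\SL_n, \Sp_n\}$, so that the constant $B$ appearing in Theorem \ref{thm:largeSieve} and inherited by the corollary is precisely the quantity $B_n$ defined in \eqref{eq:Bn}.

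Next, I would verify the local densities hypothesis \eqref{eq:localDensities} for $A = \varphi(\Q(\zeta_{4p}))$: this is exactly the content of Proposition \ref{prop:definableSubsets}, whose two hypotheses are the very conditions placed on $\varphi$ in the statement of the proposition to be proved. With this in hand, the large sieve estimate of Proposition \ref{prop:largeSieveExplicit} applies, and specializing to the cyclotomic case $E = \Q(\zeta_{4p})$ gives Corollary \ref{cor:largeSieveExplicitQzeta4p}, yielding the announced bound
\[
P\bigl(\Kl_{n,q}(x) \in \varphi(\Q(\zeta_{4p}))\bigr) \ll_{\varphi,\varepsilon} \frac{p^\varepsilon \log q}{B_n q^{1/(2B_n)}}
\]
under the range $e \ge 16 B_n$.

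The one bookkeeping point that requires care is that the family $(\Klc_n)$ is indexed not by \emph{all} prime ideals of $\Oc = \Z[\zeta_{4p}]$ away from $p$, but by the restricted set $\Lambda_n = \{\qf \text{ above } \ell \gg_n 1,\ \ell \equiv 1 \pmod 4,\ \ell \neq p\}$. I would match this to the Chebotarev framework of \eqref{eq:chebotarev} by writing $\Lambda_n$ as (a cofinite subset of) $\Lambda(C)$ for $F = \Q(\zeta_4)$ over $E$ and $C$ the trivial coset, or equivalently by absorbing the congruence condition $\ell \equiv 1 \pmod 4$ and the finitely many excluded small primes into the implicit constant; this only alters the Chebotarev lower bound of Proposition \ref{prop:chebotarevCyclotomic} by a bounded factor depending on $n$ alone, which is harmless.

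I do not expect any serious obstacle here: the substance of the argument is entirely contained in the preceding sections (monodromy determination in \cite{PGIntMonKS16}, the large sieve of Theorem \ref{thm:largeSieve}, the CDM local densities in Proposition \ref{prop:definableSubsets}, and the uniform Chebotarev estimate of Proposition \ref{prop:chebotarevCyclotomic}). The role of the proof is simply to assemble these results; the bound \eqref{eq:definableSubsetsKl} is then the verbatim specialization of Corollary \ref{cor:largeSieveExplicitQzeta4p} with $B = B_n$.
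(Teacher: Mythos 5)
Your proposal is correct and matches the paper's own (very brief) proof, which is exactly the assembly you describe: Proposition \ref{prop:Klc} for the coherent family with $B=B_n$, Proposition \ref{prop:definableSubsets} for the local densities hypothesis \eqref{eq:localDensities}, and Corollary \ref{cor:largeSieveExplicitQzeta4p} for the final bound in the range $e\ge 16B_n$. Your bookkeeping remark about $\Lambda_n$ is a fine (and slightly more careful than the paper's) way to handle the restricted index set; note in particular that the condition $\ell\equiv 1\pmod 4$ is automatic for the primes $\ell\equiv 1\pmod{4p}$ that dominate the lower bound of Proposition \ref{prop:chebotarevCyclotomic}, so only the finitely many excluded small primes $\ell\ll_n 1$ need absorbing.
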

Proposition \ref{prop:LSKlf} is a particular case of the latter, using Example \ref{ex:definablePolynomials}.

\subsubsection{Results for unnormalized sums}\label{subsubsec:unnormalized} Replacing $A$ by $q^{(n-1)/2}A$ in Proposition \ref{prop:largeSieve} and using uniformity shows that the above results also hold for unnormalized Kloosterman sums.

\subsubsection{Galois actions}\label{subsubsec:GaloisKl}
When considering densities of the form \eqref{eq:definableSubsetsKl}, it is interesting to take into account the following Galois actions:

\begin{enumerate}
\item For all $\sigma\in\Gal(\F_q/\F_p)\cong\Z/e$ and $x\in\F_q^\times$,
  \[\Kl_{n,q}(x)=\Kl_{n,q}(\sigma(x)).\]
  The orbit of $x$ has size $\deg(x)\in\{1,\dots, e\}$. Fisher \cite[Corollary 4.25]{Fisher92} has actually shown that if $p>(2n^{2e}+1)^2$, the Kloosterman sums are distinct up to this action.
\item For $\sigma\in\Gal(\Q(\zeta_p)/\Q)\cong\F_p^\times$ corresponding to $c\in\F_p^\times$ and $x\in\F_q^\times$, we have
  \[\sigma(\Kl_{n,q}(x))=\Kl_{n,q}(c^nx).\]
  Moreover, orbits have size $\frac{p-1}{(p-1,n)}\in\{(p-1)/n,\dots,p-1\}$.
\end{enumerate}

If $\varphi$ is a first-order formula in the language of rings, let $A_p=\varphi(\Q(\zeta_{4p}))$. Since $\sigma(A_p)=A_p$ for all $\sigma\in\Gal(\Q(\zeta_{p})/\Q)$, we can define an equivalence relation $\sim$ on $\{x\in\F_q^\times : \Kl_{n,q}(x)\in A_p\}$ generated by $x\sim c^n x$ for all $c\in\F_p^\times$, $x\in\F_q^\times$, and we have
\begin{eqnarray*}
  |\{x\in\F_q^\times : \Kl_{n,q}(x)\in A_p\}/\sim|&=&\frac{|\{x\in\F_q^\times : \Kl_{n,q}(x)\in A_p\}|(p-1,n)}{p-1}\\
  &\ll_n&\frac{|\{x\in\F_q^\times : \Kl_{n,q}(x)\in A_p\}|}{p-1}.
\end{eqnarray*}
If in addition the hypotheses of Proposition \ref{prop:definableSubsetsKl} are satisfied, this yields
\[|\{x\in\F_q^\times : \Kl_{n,q}(x)\in A_p\}/\sim|\ll_{n,\varepsilon} \frac{q^{1-1/(2B_n)}\log{q}}{p^{1-\varepsilon}}.\]
\begin{remark}
  The right-hand side can tend to $0$ with $p\to+\infty$ only when $e<\frac{2B_n}{2B_n-1}$. Since $\frac{2B_n}{2B_n-1}\in(1,2)$, this is the case only for $e=1$. Unfortunately, our estimate on the number of prime ideals of bounded norm in $\Q(\zeta_{4p})$ requires to take $e\gg 1$. If it could be extended to $e=1$ (but see Remarks \ref{rem:methodExtende1} \ref{rem:item:methodExtende1}), the above would show that for $p$ large enough, there is no $x\in\F_p^\times$ such that $\Kl_{n,p}(x)\in \varphi(\Q(\zeta_{4p}))$.
\end{remark}

\subsection{Exploiting monodromy over $\C$}\label{subsec:exploitMonodromyC}

As we mentioned in the previous section, determining integral monodromy groups (as required by Definition \ref{def:coherentFamily} \ref{item:coherentFamily2}), say for a subset of valuations of density 1, is usually difficult.

By using some deep results of Larsen and Pink (relying in particular on the classification of finite simple groups in \cite{Lars95}), the following result allows to obtain coherent families from the knowledge of the monodromy groups over $\overline\Q_\ell$, up to passing to a subfamily of density $1$ depending on $p$.

\begin{theorem}\label{thm:monodromyLP}
  Let $E\subset\C$ be a Galois number field with ring of integers $\Oc$ and let $\Lambda$ be a set of valuations on $\Oc$ of natural density $1$. Let $(\Fc_\lambda)_{\lambda\in\Lambda}$ be a compatible system with $\Fc_\lambda$ a sheaf of $\Oc_\lambda$-modules over $\F_p$. We assume that:
  \begin{enumerate}[start=2]
  \item[(2')] There exists $G\in\{\SL_n,\Sp_{2n}\}$ such that for every $\lambda\in\Lambda$, the arithmetic monodromy group of $\Fc_\lambda\otimes\overline\Q_\ell$ is conjugate to $G(\overline\Q_\ell)$.
  \end{enumerate}
  Then there exists a subset $\Lambda_p\subset\Lambda$ of natural density $1$, depending on $p$ and on the family, such that $(\Fc_\lambda)_{\lambda\in\Lambda_p}$ is coherent, with monodromy group structure $G$.
\end{theorem}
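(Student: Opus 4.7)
The plan is to combine the Larsen-Pink machinery with a standard commutator argument. I shall first exploit the compatibility of the system to confine the image of $\rho_\lambda$ inside $G(\Oc_\lambda)$, then invoke the main theorem of Larsen \cite{Lars95} to produce a density-one set of $\lambda$ for which the residual arithmetic image is maximal, and finally use an elementary commutator argument to deduce that the geometric and arithmetic monodromy groups coincide modulo $\qf$.

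First, by compatibility of $(\Fc_\lambda)_{\lambda \in \Lambda}$ together with assumption (2'), the characteristic polynomials of Frobenius lie in $E[T]$ and the Zariski closure of $\rho_\lambda(\pi_{1,p})$ in $\GL_n(\overline\Q_\ell)$ is conjugate to $G(\overline\Q_\ell)$ for every $\lambda$. Since by construction $\rho_\lambda$ factors through $\GL_n(\Oc_\lambda)$, the image sits, up to conjugation in $\GL_n(\Oc_\lambda)$, inside $G(\Oc_\lambda)$; reduction modulo $\qf$ then produces a well-defined map $\widehat\rho_\lambda : \pi_{1,p} \to G(\F_\lambda)$ whose image I need to identify.

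The heart of the proof is then the invocation of the main result of Larsen \cite{Lars95} on the maximality of the Galois action in compatible systems (ultimately relying on \cite{LarsPink92} and the classification of finite simple groups). Applied to our setting with $G \in \{\SL_n, \Sp_{2n}\}$, its conclusion reads: there exists $\Lambda_p \subset \Lambda$ of natural density one, depending on $p$ and on the family, such that $\widehat\rho_\lambda(\pi_{1,p}) = G(\F_\lambda)$ for every $\lambda \in \Lambda_p$. The Larsen-Pink approach enumerates the proper maximal subgroups of $G(\F_\lambda)$ and shows, family by family, that each such exceptional maximal subgroup can contain the residual image only for a density-zero set of $\lambda$; the rationality of the characteristic polynomials in the fixed number field $E$, together with the common semisimple envelope $G(\overline\Q_\ell)$, is precisely what rules out the exceptional cases for almost all $\lambda$. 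I expect this to be the main obstacle, since it is where the arithmetic input (compatibility and number-field rationality) has to be matched against the group-theoretic classification; everything else in the proof is comparatively formal.

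To pass from the arithmetic to the geometric monodromy, I would observe that $G_\geom(\widehat\Fc_\lambda)$ is a normal subgroup of $G_\arith(\widehat\Fc_\lambda) = G(\F_\lambda)$ whose quotient embeds into $\pi_{1,p}/\pi_{1,p}^\geom \cong \Gal(\overline\F_p/\F_p) \cong \hat\Z$, which is abelian. Consequently $[G(\F_\lambda), G(\F_\lambda)] \subset G_\geom(\widehat\Fc_\lambda)$. Because $\SL_n(\F_\lambda)$ and $\Sp_{2n}(\F_\lambda)$ are perfect as soon as $|\F_\lambda|$ is sufficiently large (and removing the finitely many offending $\lambda$ does not affect the natural density of $\Lambda_p$), this forces $G_\geom(\widehat\Fc_\lambda) = G(\F_\lambda)$ as well. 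The conductor condition \ref{item:coherentFamily3} of Definition \ref{def:coherentFamily} being part of the hypotheses, this concludes the verification that $(\Fc_\lambda)_{\lambda \in \Lambda_p}$ forms a coherent family.
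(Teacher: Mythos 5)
Your reduction from arithmetic to geometric monodromy (normality, abelian quotient of $\pi_{1,p}/\pi_{1,p}^\geom\cong\hat\Z$, perfectness of $\SL_n(\F_\lambda)$ and $\Sp_{2n}(\F_\lambda)$ for $|\F_\lambda|$ large) is correct and is essentially the paper's closing argument. The gap is in what you call the heart of the proof: you invoke ``the main result of Larsen \cite{Lars95}'' as if it directly yielded a density-one set $\Lambda_p\subset\Lambda$ with $\widehat\rho_\lambda(\pi_{1,p})=G(\F_\lambda)$. No such off-the-shelf statement exists in the generality needed here: the results of \cite[Part II]{LarsPink92} and \cite{Lars95} are stated for compatible systems with $\Q$-rational characteristic polynomials, i.e.\ $E=\Q$, $\Oc_\lambda=\Z_\ell$ and residue fields $\F_\ell$ of prime order. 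In the present theorem $E$ is a general Galois number field and $\F_\lambda$ need not be prime, and the naive generalization fails at a key step of the Larsen--Pink machinery: the maximal subgroups of $G(\F_\lambda)$ are no longer all of the form $H(\F_\lambda)$ for a smooth $\Oc_\lambda$-subgroup scheme $H\subset G_{\Oc_\lambda}$ (subfield subgroups appear), so the classification input \cite[(1.1),(1.19)]{Lars95} that your black box relies on cannot be quoted for such $\lambda$. This is precisely the pitfall the paper warns about, and it is also why earlier applications of this idea in \cite{KowLS06} required correction.

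Consequently the bulk of the actual proof, which your proposal dismisses as ``comparatively formal,'' has to be carried out: one first restricts to the subset $\Lambda_1\subset\Lambda$ of primes of degree one over $\Q$ (still of natural density $1$), so that $\Oc_\lambda=\Z_\ell$ and $\F_\lambda=\F_\ell$ and the subgroup classification applies; one then reruns the Larsen--Pink density argument with $E$ in place of $\Q$: for regular Frobenius elements $\alpha$ outside an exceptional subvariety one attaches Frobenius modules via the maximal tori containing $\rho_\lambda(\Frob_\alpha)$, whose isomorphism class depends only on the Frobenius of $\lambda$ in the splitting field $L_\alpha/E$ of the (common, $E$-rational) characteristic polynomial; a linear-disjointness statement for the fields $L_{\alpha_i}$, proved by a Chebotarev argument over $E$ together with \cite[(7.5.3)]{LarsPink92}, then bounds the upper density of the exceptional set $B=\{\lambda:\widehat\Gamma_\lambda\lneq G(\F_\lambda)\}$ by $(1-1/n!)^R$ for every $R$, forcing $\overline\delta(B)=0$. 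Without this adaptation (or an explicit reference proving maximality for compatible systems over a number field $E\neq\Q$), the central claim of your proof is unsupported; note also that this route is exactly why the theorem is restricted to simply connected $G$ ($\SL_n$, $\Sp_{2n}$) and excludes $\SO_n$.
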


After using Theorem \ref{thm:monodromyLP}, we may apply Proposition \ref{prop:largeSieve} with the coherent subfamily $(\Fc_\lambda)_{\lambda\in\Lambda_p}$ to get
\begin{equation}
  \label{eq:largeSieveLP}
  P\big(t(x)\in A\big)\ll \frac{1}{|(\Lambda_p)_L|}\ll_p \frac{1}{|\Lambda_L|},
\end{equation}
when $L=\floor{q^{1/(2B)}}\to+\infty$, with the implied constant depending on $p$ and on the original family.

\subsubsection{Proof of Theorem \ref{thm:monodromyLP}}

  The idea of the argument, based on \cite{LarsPink92} and \cite{Lars95}, is due to Katz and appears partly in \cite[p. 29]{KowLS06}, \cite[p. 7]{KowWeilNumbers06}, \cite[pp. 188--189]{KowLargeSieve08} (however see Remark \ref{rem:LarsenPinkKow} below), and \cite[Section 7]{Katz12}.\\

  To reduce as much as possible to the situation of \cite{LarsPink92} and \cite{Lars95}, we consider the subset $\Lambda_1\subset\Lambda$ corresponding to ideals of degree $1$ over $\Q$, so that $E_\lambda=\Q_\ell$, $\Oc_\lambda=\Z_\ell$ and $\F_\lambda=\F_\ell$ if $\lambda\in\Lambda_1$ is an $\ell$-adic valuation. By \cite[4.7.1]{Jan05}, for any $S\subset\Spec(\Oc)$, the Dirichlet density of $S$ is equal to the Dirichlet density of the elements of $S$ having degree $1$ over $\Q$. In particular, $\Lambda_1$ has Dirichlet density $1$, and actually natural density $1$ by \cite[Corollary 2, p. 248]{Nark04} (for cyclotomic fields, see also the proof of Proposition \ref{prop:chebotarevCyclotomic}).\\

  In the notations of \cite[Section 3]{Lars95} and definitions of \cite[Section 6]{LarsPink92}, we have the compact $F$-group $\pi_{1,p}$ with compatible system of representations
  \[\big(\rho_\lambda=\rho_{\Fc_\lambda}: \pi_{1,p}\to\GL_n(\Oc_\lambda)=\GL_n(\Z_\ell)\big)_{\lambda\in\Lambda_1}\]
  and Frobenius $\Frob_\alpha$ for $\alpha\in\Ac=\{(x,p^n): n\ge 1, x\in\F_{p^n}\}$. Note that $G$ is a simply connected reductive group scheme over $\Z$, and by hypothesis $\rho_\lambda$ is semisimple.

   For every $\lambda\in\Lambda$, we let $G_\lambda=G_{E_\lambda}$, $\Gamma_\lambda=\rho_\lambda(\pi_{1,p})\le G(\Oc_\lambda)$ the integral monodromy group, $\widetilde \Gamma_\lambda:=\Gamma_\lambda\pmod{\lambda}$ its reduction, and
  \[B=\{\lambda\in\Lambda : \widetilde \Gamma_\lambda\lneq G(\F_\lambda)\}\subset\Lambda\]
  the set of valuations where the monodromy group is smaller than expected. We let moreover:
  \begin{itemize}
  \item For every $\alpha\in\Ac$,
    \[\xi(\alpha)\in \Oc_\lambda[T]\le E[T]\]
    the characteristic polynomial of $\rho_{\lambda}(\Frob_{\alpha})$ (which does not depend on $\lambda\in\Lambda$ by hypothesis).
  \item $K\subset\xi(G_{\Q})$ the $\Q$-rational closed subvariety of codimension $\ge 1$ given by \cite[(3.8)]{Lars95}. There exists a constant $C_\alpha> 0$ such that $\xi(\alpha)\pmod{\ell}\not\in K\pmod{\ell}$ if $\ell>C_\alpha$.
  \item $\Ac'\subset\Ac$ the set of the $\alpha\in\Ac$ such that:
    \begin{enumerate}
    \item $\rho_\lambda(\Frob_\alpha)$ is \textit{regular with respect to $\GL_n$} (see \cite[(3.4)]{Lars95}, \cite[(4.5)]{LarsPink92}) for every $\lambda\in\Lambda$.
    \item $\xi(\alpha)\not\in K$.
    \end{enumerate}
    By \cite[(3.11)]{Lars95}, $\{\Frob_\alpha : \alpha\in\Ac'\}\subset\pi_{1,p}$ is still dense and by \cite[(4.7)]{LarsPink92}:
    \begin{enumerate}
    \item $\rho_\lambda(\Frob_\alpha)$ lies in a unique maximal torus of $T_{\lambda,\alpha}$ of $G_{E_\lambda}$.
    \item $\xi(\alpha)$ is associated to a torus $T_\alpha$ in $\GL_{n,E}$, unique up to $\GL_n(E)$-conjugacy, such that $T_\alpha\times_E E_\lambda$ is conjugate to $T_{\lambda,\alpha}$.
    \item The splitting field of these tori is equal to the splitting field $L_\alpha$ of $\xi(\alpha)$ over $E$ \cite[(4.4)]{LarsPink92}.
    \end{enumerate}
  \item $C'_\alpha\ge 1$ such that $L_\alpha/\Q$ is unramified at any $\ell>C_\alpha'$.
  \item $L$ the intersection of the $L_\alpha$ for $\alpha\in\Ac'$, so that $\Q\subset E\subset L\subset L_\alpha$.
  \end{itemize}
  
  We decompose
  \[B=\left(B\cap(\Lambda\backslash\Lambda_1)\right) \ \bigcup \bigcup_{x\in\Gal(L/E)^\sharp} B_x,\]
  where $B_x=\{\lambda\in\Lambda_1\cap B : [\lambda, L/E]=x\}$.
  
  The upper natural density of $B$ is
  \begin{eqnarray*}
    \overline\delta(B)&=&\limsup_{S\to+\infty} \frac{|\{\lambda\in B : N(\lambda)\le S\}|}{|\{\lambda\in\Lambda : N(\lambda)\le S\}|}\\
    &\le&\overline\delta(\Lambda\backslash\Lambda_1)+\sum_{x\in\Gal(L/E)^\sharp} \overline\delta\left(B_x\right)=\sum_{x\in\Gal(L/E)^\sharp} \overline\delta\left(B_x\right).
  \end{eqnarray*}
  
  Let us fix a class $x\in\Gal(L/E)^\sharp$ and an $\ell'$-adic valuation $\lambda'\in\Lambda_1$ with Frobenius $[\lambda', L/E]=x$.

  If $\lambda\in B_x$, then $\Gamma_\lambda$ is a proper subgroup of $G(\F_\lambda)=G(\F_\ell)$. By \cite[(1.1), (1.19)]{Lars95}, when $\ell\gg_G 1$, every maximal subgroup of $G(\F_\ell)$ is of the form $H(\F_\ell)$, for $H\subset G_{\Z_\ell}$ a smooth $\Z_\ell$-subgroup scheme. By \cite[(3.17)]{Lars95} (see also \cite[(3.8)]{Lars95}), it follows that there exists a maximal proper reductive $\Q_\ell$-subgroup $N$ of $G_\lambda$ (containing a Levi component of $H_{\Q_\ell}$) such that
\[\FM(\lambda,\alpha)\in\FM_{N^\circ}\subsetneq\FM_{G_\lambda}\]
for every $\alpha\in\Ac'$ such that $\ell>D_\alpha=\max(C_\alpha, C'_\alpha)$, where:
\begin{itemize}
\item $N^\circ$ is the identity component of $N$.
\item $\FM(\lambda,\alpha)$ is the isomorphism class of the Frobenius module (i.e. free $\Z$-module of finite rank with an endomorphism of finite order) arising from the character group of the maximal torus $T_{\lambda,\alpha}\le G_\lambda$ containing $\rho_\lambda(F_\alpha)$, with the action of $\Gal(\overline\Q_\ell/\Q_\ell)=\Gal(\overline\Q_\ell/E_\lambda)$. By \cite[(3.14)]{Lars95}, this depends only on $[\ell, L_\alpha/\Q]=[\lambda,L_\alpha/E]$ up to isomorphism.
\item $\FM_{G_\lambda}$ and $\FM_{N^\circ}$ are the set of isomorphism classes of Frobenius modules arising from unramified tori of $G_\lambda$, resp. $N^\circ$.
\end{itemize}

Let $M\in\FM_{G_\lambda}\backslash\FM_{N^\circ}$. As in \cite[(3.15)]{Lars95}, and \cite[(8.2)]{LarsPink92}, we will show that
\begin{center}
  $(\star)$ For every $R\ge 1$, there exist $\alpha_1,\dots,\alpha_R\in\Ac'$ such that $[M]=\FM(\lambda',\alpha_i)$ with $L_{\alpha_i}$ linearly disjoint\footnote{Here, this means that for any $2\le i\le R$, $L_{\alpha_1}\dots L_{\alpha_{i-1}}$ and $L_{\alpha_i}$ are linearly disjoint over $L$, i.e. their intersection is equal to $L$.}.
\end{center}

Assuming this, it follows that if $\ell>\max_{1\le i\le R}{D_{\alpha_i}}$, then for $1\le i\le R$,
\[[\lambda, L_{\alpha_i}/E]=[\ell, L_{\alpha_i}/\Q]\neq [\ell', L_{\alpha_i}/\Q]=[\lambda', L_{\alpha_i}/E]\]
in $\Gal(L_{\alpha_i}/\Q)\ge\Gal(L_{\alpha_i}/E)$, since $M\neq\FM(\lambda,\alpha_i)$. Therefore, by Chebotarev's theorem,
\begin{eqnarray*}
  \overline\delta \left(B_x\right)&\le&\overline\delta\left(\{\lambda\in\Lambda : [\lambda, L_{\alpha_i}/E]\neq[\lambda',L_{\alpha_i}/E] \ \text{for }1\le i\le R\}\right)\\
                                  &=&\left(1-\frac{1}{n!}\right)^R \frac{|x|}{|\Gal(L/E)|}
\end{eqnarray*}
since $[L_{\alpha_i}:E]\le n!$ and by linear disjointedness. Hence $\overline\delta(B)\le (1-1/n!)^R$ for every $R\ge 1$, so that $B$ has natural density $0$ by taking $R\to+\infty$.\\

We now prove $(\star)$. It suffices to show that for any finite Galois extension $F/L$, there exists $\alpha\in\Ac'$ such that $[M]=\FM(\lambda',\alpha)$ with $L_\alpha$ and $F$ linearly disjoint over $L$. We proceed as in \cite[(8.2)]{LarsPink92} (where $E=\Q$).

For $K_1,\dots,K_m$ the intermediate fields of $F/L$ normal over $L$ and minimal with respect to inclusion with this property, we have that $L_\alpha$ is linearly disjoint with $F$ over $L$ if and only if $K_i\not\subset L_\alpha$ for all $1\le i\le m$. This holds in particular if for every $i$ there exists $\lambda_i\in\Lambda_1$ corresponding to a prime that splits in $L_\alpha$, but not in $K_i$.

For every $1\le i\le m$, let $\beta_i\in\Ac'$ be such that $K_{i}\not\subset E_{\beta_i}$. By minimality of $K_i$, we have $E_{\beta_i}\cap K_{i}=L$, so that $\Gal(L_{\beta_i}/L)\times\Gal(K_i/L)$ is contained in
\[\{(\sigma_1,\sigma_2)\in\Gal(L_{\beta_i}/E)\times\Gal(K_i/E) : \sigma_1\mid_L=\sigma_2\mid_L\}\cong\Gal(L_{\beta_i}K_i/E).\]
By Chebotarev's theorem, the set of $\lambda\in\Spec(\Oc)$ that split in $L_{\beta_i}$ but does not split in $K_i$ has positive Dirichlet density, so the same holds for the $\lambda\in\Lambda_1$ with this property, since $\Lambda_1$ has Dirichlet density $1$. Hence, there exists $\lambda_i\in\Lambda_1\backslash\{\lambda'\}$ that splits in $L_\alpha$ but not in $K_i$, and we may suppose all the $\lambda_i$ distinct.

By \cite[(7.5.3)]{LarsPink92}, there exists $\alpha\in\Ac'$ such that:
\begin{enumerate}
\item $T_{\lambda',\alpha}$ is conjugate in $\GL_n(E_{\lambda'})$ to the unramified maximal torus of $G_{\lambda'}$ corresponding to $M$, so $[M]=\FM(\lambda',\alpha)$.
\item $T_{\lambda_i,\alpha}$ is conjugate in $\GL_n(E_{\lambda_i})$ to $T_{\lambda_i,\beta_i}$. Since $\lambda_i$ splits in $L_{\beta_i}$, this torus is split, so that $\lambda_i$ also splits in $L_\alpha$.
\end{enumerate}

This concludes the argument.

  \begin{equation*}
    \xymatrix@R=0.5cm@C=1.1cm{
      L_\alpha\ar@{-}[ddr]\ar@{-}[dr]&&&F\ar@{-}[dl]\ar@{-}[dll]\\
      &F\cap L_\alpha\ar@{-}[d]&K_i\ar@{-}[dl]&\\
      &L\ar@{-}[d]&&\\
      &E\ar@{-}[d]&&\\
      &\Q&&
    }
  \end{equation*}

Finally, concerning the geometric integral monodromy group $\Gamma^\geom_\lambda=\rho_\lambda(\pi_{1,p}^\geom)\normal\Gamma_\lambda$, note that:
  \begin{enumerate}
  \item\label{item:proof:monodromyLP1} $\widetilde \Gamma_\lambda/\widetilde \Gamma^\geom_\lambda$ is a finite quotient of $\pi_{1,p}/\pi_{1,p}^\geom\cong\widehat\Z$, hence a finite cyclic group.
  \item\label{item:proof:monodromyLP2} If $|\F_\lambda|\gg_G 1$, the group $G'(\F_\lambda):=G(\F_\lambda)/Z(G(\F_\lambda))$ is simple nonabelian (see e.g. \cite[Theorem 24.17]{TesMal11}).
  \end{enumerate}
  Hence, by \ref{item:proof:monodromyLP2}, if $\widetilde \Gamma^\geom_\lambda\lneq G(\F_\lambda)$, then it is contained in $Z(G(\F_\lambda))$, so that
  \[G'(\F_\lambda)\cong \frac{\widetilde \Gamma_\lambda/\widetilde \Gamma^\geom_\lambda}{Z(G(\F_\lambda))/\widetilde \Gamma^\geom_\lambda}\]
  would be cyclic by \ref{item:proof:monodromyLP1}, a contradiction. \qed

\begin{remarks}\label{rem:LP}
  \begin{enumerate}
  \item We consider compatible systems of representations $\rho_\lambda: \pi\to\GL_n(\Oc_\lambda)$, where $\lambda$ is a valuation on the ring of integers $\Oc$ of a number field $E/\Q$, while the results in \cite[Part II]{LarsPink92} and \cite{Lars95} are stated for the case $E=\Q$. One needs to be cautious before stating the natural generalizations of the results of Larsen and Pink. For example, under the notations of the theorem, the maximal subgroups of $G(\F_\lambda)$ are not all of the form $H(\F_\lambda)$ for $H\subset G_{\Oc_\lambda}$ a smooth $\Oc_\lambda$-subgroup scheme, unless $\F_\lambda=\F_\ell$ as in \cite[(1.1), (1.19)]{Lars95}: for instance, one has subfield subgroups.
  \item\label{rem:LPSO} Theorem \ref{thm:monodromyLP} cannot be used when $G=\SO_n$, since it is not simply connected, and this assumption is required for \cite[(1.19)]{Lars95}. In even dimension, note that one would need additional input to determine the type ($+$ or $-$) of the monodromy groups over $\F_\lambda$.
\end{enumerate}
\end{remarks}

\subsubsection{Arithmetic and geometric monodromy groups}\label{subsubsec:twisting}

  Often, only the geometric monodromy group is determined, while Theorem \ref{thm:monodromyLP} and Definition \ref{def:coherentFamily} require knowledge of the arithmetic monodromy. By twisting a sheaf $\Fc_\lambda$ by a constant or a Tate twist, it is often possible to get a sheaf $\Fc'_\lambda$ with
  \[G_\geom(\Fc_\lambda)=G_\geom(\Fc'_\lambda)\le G_\arith(\Fc'_\lambda)\le G_\geom(\Fc'_\lambda),\]
  so that $G_\geom(\Fc'_\lambda)=G_\arith(\Fc'_\lambda)=G_\geom(\Fc_\lambda)$. Examples will be given in the next sections.

\begin{remark}\label{rem:LarsenPinkKow}
  In \cite{KowLS06,KowWeilNumbers06,KowLargeSieve08}, the results of Larsen--Pink are applied to deduce the geometric monodromy group over $\F_\ell$ from the geometric group over $\overline\Q_\ell$ . However, this is incorrect since the geometric group does not contain a dense subset of the Frobenius. Moreover, note that the arithmetic monodromy group is not contained in $\Sp_{2g}(\overline\Q_\ell)$ (but in $\GSp_{2g}(\overline\Q_\ell)$).

    For the unnormalized family of first cohomology groups of hyperelliptic curves, this is not an issue because the results of J.-K. Yu and C. Hall also apply to give the geometric monodromy groups. Alternatively, one may normalize by a Tate twist as in Proposition \ref{prop:pointCounting} and apply Theorem \ref{thm:monodromyLP} to the normalized sheaf (see above).

    For the characteristic 2 example of \cite[Proposition 3.3]{KowWeilNumbers06}, the result of Hall can also be applied because the local monodromy at $0$ is a unipotent pseudoreflection. Again, one could also apply Theorem \ref{thm:monodromyLP} after normalizing.

    On the other hand, the statement \cite[Theorem 6.1]{KowLS06} must be modified to assume for example that the \textit{arithmetic} monodromy group is $\Sp$, or that the geometric monodromy groups over $\F_\ell$ are known for all $\ell\gg 1$.
\end{remark}

\subsection{General exponential sums}

Finally, we use the previous section to give examples of coherent families of the form \eqref{eq:expsumGen}.

\subsubsection{Construction of coherent families}

\begin{proposition}[Exponential sums \eqref{eq:expsumGen}, $h=0$, $\chi=1$]\label{prop:expsumGen1}
  Let $f\in\Q(X)$ and let $Z_{f'}$ be the set of zeros of $f'$ in $\C$, having cardinality $k_f$. We assume that the zeros of $f'$ are simple, that $|f(Z_{f'})|=|Z_{f'}|$ (i.e. $f$ is \emph{supermorse}), and that either:
  \begin{itemize}
  \item $(H_1)$: $k_f$ is even, $\sum_{z\in Z_{f'}} f(z)=0$, and if $s_1-s_2=s_3-s_4$ with $s_i\in f(Z_{f'})$, then $s_1=s_3,s_2=s_4$ or $s_1=s_2,s_3=s_4$.
  \item $(H_2)$: $f$ is odd, and if $s_1-s_2=s_3-s_4$ with $s_i\in f(Z_{f'})$, then $s_1=s_3,s_2=s_4$ or $s_1=s_2,s_3=s_4$ or $s_1=-s_4,s_2=-s_3$.
  \end{itemize}
  If $p$ is large enough, for $E=\Q(\zeta_{4p})$ and $\Lambda_{0,p}$ as in Example \ref{ex:Lambda}\ref{item:exLambdaI}, there exists a family $(\Gc_{f,\lambda})_{\lambda\in\Lambda_{0,p}}$ of sheaves of $\Oc_\lambda$-modules over $\F_p$, with trace function
  \[x\mapsto \frac{-1}{\sqrt{q}}\sum_{\substack{y\in\F_q\\f(y)\neq\infty}}e\left(\frac{\tr(xf(y))}{p}\right)\hspace{0.2cm} (x\in\F_q),\]
  and conductor depending only on $f$.
  
  Moreover, there exists $\alpha_p\in\overline\Q$ and a set of valuations $\Lambda'=\Lambda'_{f,p}$ of density $1$ on $E'=E(\alpha_p)$, depending only on $f$ and $p$, such that
  \[\left(\Gc_{f,\lambda}\otimes\alpha_p\Oc'_\lambda\right)_{\lambda\in\Lambda'}\]
    is a coherent family of sheaves of $\Oc_\lambda'$-modules over $\F_p$, for $\Oc'$ the ring of integers of $E'$, with monodromy group structure
    \begin{itemize}
    \item $G=\SL_{k_f}$ if $(H_1)$ holds.
    \item $G=\Sp_{k_f}$ if $(H_2)$ holds, and one may take $\alpha_p=1$.
    \end{itemize}
\end{proposition}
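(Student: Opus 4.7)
The plan is to realize $\Gc_f$ as the normalized $\ell$-adic Fourier transform of an Artin--Schreier pullback and to apply the machinery developed above. Let $\psi:\F_p\to\C^\times$ be the additive character $t\mapsto e(\tr(t)/p)$, which takes values in $\Oc_\lambda=\Z[\zeta_{4p}]_\lambda$ for every $\lambda\in\Lambda_{0,p}$, and let $\Lc_\psi$ be the associated Artin--Schreier sheaf of $\Oc_\lambda$-modules on $\A^1/\F_p$. Writing $f:U\to\A^1$ for the morphism defined by $f$ on its open $U$ of regularity and $j:U\hookrightarrow\A^1$ for the inclusion, set
\[
\Gc_f\ :=\ \FT_\psi\bigl(j_!\,f^\ast\Lc_\psi\bigr),
\]
where $\FT_\psi$ is the normalized Fourier transform (already incorporating the Tate twist that enforces pointwise purity of weight $0$). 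The Grothendieck--Lefschetz trace formula, combined with the standard description of trace functions of $\ell$-adic Fourier transforms, yields the claimed trace function. Compatibility of the family $(\Gc_f)_{\lambda\in\Lambda_{0,p}}$ and the uniform conductor bound of Definition \ref{def:coherentFamily}\,\ref{item:coherentFamily3} both follow from Lemma \ref{lemma:compSysFT} applied to the obviously compatible system $(j_!f^\ast\Lc_\psi)_\lambda$.

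Determining the monodromy over $\overline\Q_\ell$ is the heart of the matter and is exactly where hypotheses $(H_1)$ and $(H_2)$ intervene. One first verifies that the generic rank of $\Gc_f$ equals $k_f=|Z_{f'}|$, coming from the ramification of $f$ at its critical points. Under $(H_1)$, Katz's theorems on the monodromy of Fourier transforms of polynomials (see \cite[Chapters 7 and 19]{KatzESDE}) apply: the distinctness condition on differences $s_1-s_2=s_3-s_4$ with $s_i\in f(Z_{f'})$ is precisely what rules out imprimitivity and tensor-decomposability of the geometric monodromy representation, while $\beta=0$ forces the determinant to be geometrically trivial, and together these yield $G_\geom(\Gc_f\otimes\overline\Q_\ell)=\SL_{k_f}(\overline\Q_\ell)$. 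Under $(H_2)$, the oddness of $f$ supplies a geometric symplectic autoduality of $\Gc_f$; the weakened distinctness hypothesis, allowing also $s_1=-s_4,s_2=-s_3$, is then the minimal further condition ruling out proper irreducible symplectic subgroups containing sufficiently many regular semisimple elements, and gives $G_\geom(\Gc_f\otimes\overline\Q_\ell)=\Sp_{k_f}(\overline\Q_\ell)$.

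With the geometric group identified, $G_\arith(\Gc_f\otimes\overline\Q_\ell)$ normalizes it in $\GL_{k_f}(\overline\Q_\ell)$, and the arithmetic/geometric quotient is a scalar subgroup of $\overline\Q_\ell^\times$. Under $(H_2)$ the symplectic autoduality is in fact arithmetic, so $G_\arith=G_\geom=\Sp_{k_f}$ already and one may take $\alpha_p=1$. Under $(H_1)$ one defines $\alpha_p\in\overline\Q$ to be the scalar by which arithmetic Frobenius acts on the (geometrically trivial) determinant line $\det(\Gc_f)$---a well-defined global $\varepsilon$-factor, a Weil number of weight zero, independent of $\lambda$ by compatibility---so that the twist $\Gc_f\otimes\alpha_p$, now a family of $\Oc'_\lambda$-sheaves over $E'=E(\alpha_p)$, has trivial arithmetic determinant and satisfies $G_\arith=G_\geom=\SL_{k_f}$ over $\overline\Q_\ell$. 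Applying Theorem \ref{thm:monodromyLP} to this compatible system of $\Oc'_\lambda$-sheaves then produces a density-one subset $\Lambda'\subset\Lambda_{0,p}$ depending only on $f$ and $p$ on which arithmetic and geometric integral monodromy groups are both conjugate to $G(\F_\lambda)$, and the family $(\Gc_f\otimes\alpha_p\Oc'_\lambda)_{\lambda\in\Lambda'}$ is coherent. The main obstacle is the $(H_1)$ determinant computation: one must pin down $\alpha_p$ explicitly and verify its $\lambda$-independence, which amounts to a uniform identification of a global $\varepsilon$-factor via the compatibility of the Fourier transform.
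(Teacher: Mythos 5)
Your skeleton follows the paper's: Fourier-transform construction, compatibility and conductor control via Lemma~\ref{lemma:compSysFT}, Katz's ESDE results for big geometric monodromy over $\overline\Q_\ell$ under $(H_1)$/$(H_2)$, arithmetic symplectic autoduality in case $(H_2)$, a scalar determinant twist in case $(H_1)$, and finally Theorem~\ref{thm:monodromyLP} to get a density-one $\Lambda'$ on which the family is coherent. But the construction step, as written, produces the wrong sheaf: taking $\Gc_f=\FT_\psi\bigl(j_!f^*\Lc_\psi\bigr)$ gives the trace function $x\mapsto -q^{-1/2}\sum_{y}\psi(\tr(xy+f(y)))$, i.e.\ the family of Proposition~\ref{prop:expsumGen2} (Birch-type sums), not $x\mapsto -q^{-1/2}\sum_y\psi(\tr(xf(y)))$. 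For the sums of this proposition the function being Fourier-transformed in the dual variable is $a\mapsto|f^{-1}(a)|$, so the correct input is the direct image $f_*\overline\Q_\ell$ (with the constant part removed and a middle extension taken), which is Katz's construction in \cite[7.9.4, 7.10]{KatzESDE} and what the paper uses. This is not mere bookkeeping: for your sheaf the generic rank is governed by $\deg f$ (e.g.\ $\deg f-1$ for a polynomial), whereas the proposition's numerology --- rank $k_f=|Z_{f'}|$ and hypotheses $(H_1),(H_2)$ phrased in terms of the critical values $f(Z_{f'})$ --- is exactly the stationary-phase data of $\FT_\psi(f_*\overline\Q_\ell)$, and Katz's big-monodromy criteria you invoke are stated for that sheaf, not for yours.

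There is also a slip in the $(H_1)$ twist. You cannot take $\alpha_p$ to be ``the scalar by which arithmetic Frobenius acts on $\det(\Gc_f)$'': twisting $\Gc_f$ by a scalar $\alpha_p$ multiplies the arithmetic determinant by $\alpha_p^{k_f}$, so that choice does not trivialize it. As in the paper, one first notes (Clifford theory, using that $\pi_{1,p}/\pi_{1,p}^\geom\cong\widehat\Z$ is abelian and that the geometric determinant is trivial once $G_\geom\supseteq\SL_{k_f}$) that the arithmetic determinant is geometrically constant, given by a scalar $\beta_p\in\overline\Q$ independent of $\lambda$ by compatibility, and then one must twist by $\alpha_p=\beta_p^{-1/k_f}$, a $k_f$-th root --- this is precisely why the statement passes to $E'=E(\alpha_p)$ and to sheaves of $\Oc'_\lambda$-modules. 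Your observation that $\lambda$-independence follows from the compatible system is correct and is the same point the paper makes; with the corrected input sheaf and the corrected choice of $\alpha_p$, the remainder of your argument coincides with the paper's proof.
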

\begin{proof}
  See \cite[Theorem 7.9.4, Lemmas 7.10.2.1, 7.10.2.3]{KatzESDE} for the construction and \cite[7.9.6--7, 7.10]{KatzESDE} for the determination of $G_\geom^\circ(\Gc_{f,\lambda})$ over $\C$. The family forms a compatible system by Lemma \ref{lemma:compSysFT}. The definition over $\Oc_\lambda$ comes from the definition of the $\ell$-adic Fourier transform on the level of sheaves of $\Oc_\lambda$-modules (see \cite[Chapter 5]{KatzGKM}). Under our hypotheses, $G_\geom(\Gc_{f,\lambda})$ contains $\SL_{k_f}(\overline\Q_\ell)$, resp. $\Sp_{k_f}(\overline\Q_\ell)$. Moreover:
  \begin{itemize}
  \item In the $(H_2)$ case, $G_\arith(\Gc_{f,\lambda})\le\Sp_{k_f}(\F_\lambda)$ by \cite[7.10.4 (3)]{KatzESDE}, and we can apply Theorem \ref{thm:monodromyLP}.
  \item In the $(H_1)$ case, since $\pi_{1,p}/\pi_{1,p}^\geom\cong\hat\Z$ is abelian, there exists by Clifford theory an element $\beta_p\in\Oc_\lambda^\times\cap\overline\Q$ not depending on $\lambda$ (since we have a compatible system) such that the  determinant is isomorphic to $\beta_p\otimes\Oc_\lambda$.

    As in Section \ref{subsubsec:twisting}, we obtain that with $\alpha_p=\beta_p^{-1/k_f}\in\Oc'_{\lambda'}$ for any valuation $\lambda'$ of $\Oc'$ extending $\lambda$, the arithmetic and geometric monodromy groups of $\Gc_{f,\lambda'}\otimes \alpha_p\Oc'_{\lambda'}$ coincide and are conjugate to $\SL_{k_f}(\overline\Q_\ell)$, so that we can apply Theorem \ref{thm:monodromyLP}.
  \end{itemize}
\end{proof}
\begin{example}
  The hypotheses hold for the rational function $f=aX^{r+1}+bX$, where $a,b\in\Z$, $r\in\Z-\{1\}$, $rab\neq 0$, with $(H_1)$ if $r$ is odd and $(H_2)$ otherwise (see \cite[p. 7]{FouvryMichelSommes}), or for the polynomial $f=X^n-naX$, where $a\in\Z\backslash\{0\}$ and $n\ge 3$, with $(H_1)$ if $n$ is even, $(H_2)$ otherwise.
\end{example}

The following include for example Birch sums (with $h=X^3$):
\begin{proposition}[Exponential sums \eqref{eq:expsumGen}, $f=X$, $\chi=1$, $h$ polynomial]\label{prop:expsumGen2}
  Let $h=\sum_{i=1}^n a_iX^i\in\Z[X]$ be a polynomial of degree $n\ge 3$ with $n\neq 7,9$ and $a_{n-1}=0$. If $p$ is large enough, for $E=\Q(\zeta_{4p})$ and $\Lambda_{0,p}$ as in Example \ref{ex:Lambda}\ref{item:exLambdaI}, there exists a family $(\Gc_{h,\lambda})_{\lambda\in\Lambda_{0,p}}$ of sheaves of $\Oc_\lambda$-modules over $\F_p$ with trace function
  \[x\mapsto\frac{-1}{\sqrt{q}}\sum_{y\in\F_q} e\left(\frac{\tr(xy+h(y))}{p}\right) \ (x\in\F_q),\]
  and conductor depending only on $h$.

  Moreover, there exists $\alpha_p\in\overline\Q$ and a set of valuations $\Lambda'=\Lambda'_{h,p}$ of density $1$ on $E'=E(\alpha_p)$, depending only on $h$ and $p$, such that
  \[\left(\Gc_{h,\lambda}\otimes \alpha_p\Oc'_\lambda\right)_{\lambda\in\Lambda'}\]
  is a coherent family of sheaves of $\Oc_\lambda'$-modules over $\F_p$, for $\Oc'$ the ring of integers of $E'$, with monodromy group structure:
  \begin{enumerate}
  \item $G=\Sp_{n-1}$ if $n$ is odd and $h$ has no monomial of even positive degree; one may take $\alpha_p=1$.
  \item $G=\SL_{n-1}$ otherwise.
\end{enumerate}
\end{proposition}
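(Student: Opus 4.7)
The plan is to closely parallel the proof of Proposition \ref{prop:expsumGen1}. First, I would define $\Gc_h$ as the normalized $\ell$-adic Fourier transform $\FT_\psi(\Lc_{\psi(h(y))})$ of the Artin--Schreier sheaf on $\A^1/\F_p$, realized on the level of sheaves of $\Oc_\lambda$-modules as in \cite[Chapter 5]{KatzGKM}. For $p$ large enough (in particular with $p\nmid n$) this is a Fourier sheaf of rank $n-1$ whose trace function is the exponential sum in the statement, and Lemma \ref{lemma:compSysFT} immediately yields compatibility and uniform bounds on the conductor, verifying Conditions \ref{item:def:coherentFamily1} and \ref{item:coherentFamily3} of Definition \ref{def:coherentFamily} for the family $(\Gc_h)_{\lambda\in\Lambda_{0,p}}$.

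Second, I would invoke Katz's determination of the geometric monodromy group over $\overline\Q_\ell$ (see \cite[Chapter 7]{KatzESDE}, and also the polynomial-Fourier-sheaf theorems in \cite{KatzGKM}). The hypotheses $n\ge 3$, $n\ne 7,9$, $a_{n-1}=0$ are precisely those under which $G^0_\geom(\Gc_h\otimes\overline\Q_\ell)=\Sp_{n-1}$ when $n$ is odd and $h$ has only odd-degree monomials (so that $h(-y)=-h(y)$ forces a symplectic autoduality), and $G^0_\geom=\SL_{n-1}$ otherwise. The exclusions $n\ne 7,9$ are required to rule out exceptional monodromy groups (notably of type $G_2$) that occur in those dimensions, and the normalization $a_{n-1}=0$ prevents a change of variable from reducing to a polynomial of lower degree. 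Since the connected reductive overgroups of $\SL_{n-1}$ and $\Sp_{n-1}$ inside $\GL_{n-1}$ are just scalar extensions, one recovers the full geometric group from its connected component.

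Third, I would reconcile the arithmetic and geometric groups over $\overline\Q_\ell$ exactly as in Section \ref{subsubsec:twisting}. In the symplectic case, Katz's results imply that the symplectic autoduality is already arithmetic, hence $G_\arith\le\Sp_{n-1}$ and equality with $G_\geom$ holds, so one may take $\alpha_p=1$. In the $\SL$ case, since $\pi_{1,p}/\pi_{1,p}^\geom\cong\widehat\Z$ is abelian, Clifford theory combined with the compatible system property produces a constant $\beta_p\in\overline\Q^\times$ (independent of $\lambda$) such that the arithmetic determinant of $\Gc_h$ is $\beta_p\otimes\Oc_\lambda$; setting $\alpha_p=\beta_p^{-1/(n-1)}$ and adjoining it to $E$ to form $E'$, the twist $\Gc_h\otimes\alpha_p\Oc'_\lambda$ has arithmetic and geometric monodromy groups both equal to $\SL_{n-1}$ over $\overline\Q_\ell$.

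Finally, applying Theorem \ref{thm:monodromyLP} to the compatible system $(\Gc_h\otimes\alpha_p\Oc'_\lambda)_\lambda$ yields a density-$1$ subset $\Lambda'\subset\Lambda_{0,p}$ (depending on $p$ and $h$) on which the reduced integral monodromy groups coincide with $G(\F_\lambda)$, giving the coherent family. The main obstacle is the verification that the hypotheses of the statement fit the precise form of Katz's classification theorem for polynomial Fourier sheaves, and in particular that the exclusions $n\ne 7,9$ together with $a_{n-1}=0$ suffice to rule out exceptional geometric monodromy; once this is granted, the remainder is essentially bookkeeping paralleling the $f=X$ specialization of Proposition \ref{prop:expsumGen1}.
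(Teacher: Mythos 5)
Your proposal follows essentially the same route as the paper: construct $\Gc_h$ via the normalized $\ell$-adic Fourier transform of the Artin--Schreier sheaf (compatibility and conductor bounds from Lemma \ref{lemma:compSysFT}), quote Katz \cite[7.12]{KatzESDE} for the monodromy over $\overline\Q_\ell$, use the arithmetic containment in $\Sp_{n-1}$ in the symplectic case and the Clifford-theory twist by $\alpha_p=\beta_p^{-1/(n-1)}$ in the $\SL$ case as in Proposition \ref{prop:expsumGen1}, and then apply Theorem \ref{thm:monodromyLP} to obtain the coherent subfamily on a density-one set of valuations. (The paper simply cites \cite[7.12]{KatzESDE} for the role of the hypotheses $n\neq 7,9$ and $a_{n-1}=0$, so your heuristic gloss on them is not load-bearing.)
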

\begin{proof}
  This is similar to the proof of Proposition \ref{prop:expsumGen1}. See \cite[7.12]{KatzESDE} for the construction of the sheaves and the determination of the monodromy groups over $\C$. In the symplectic case, ibidem shows that the arithmetic monodromy group is itself contained in $\Sp_{n-1}$.
\end{proof}

\begin{proposition}[Exponential sums \eqref{eq:expsumGen}, $f$ polynomial, $\chi\neq 1$]\label{prop:expsumGen3}
    Let
  \begin{itemize}
  \item $h\in\Q(X)$ odd with a pole of order $n\ge 1$ at $\infty$.
  \item $f\in\Z[X]$ odd nonzero of degree $d$ with $(d,n)=1$.
  \item $\chi$ a character of $\F_p^\times$ of order $r\ge 2$.
  \item $g\in\Q(X)$ nonzero, with the order of any zero or pole not divisible by $r$.
  \end{itemize}
  For $p$ large enough, for $E=\Q(\zeta_{4p})$ and $\Lambda_{0,p}$ as in Example \ref{ex:Lambda}\ref{item:exLambdaI}, there exists a family $(\Gc_{h,f,\chi,g,\lambda})_{\lambda\in\Lambda_{0,p}}$ of sheaves of $\Oc_\lambda$-modules over $\F_p$ with trace function \eqref{eq:expsumGen} and conductor depending only on $f,g,h,r$.

  Moreover, if we assume that there exists $L\in\Q(X)$ even with $L(x)^r=g(x)g(-x)$ and either $N=\rank(\Gc_{h,f,\chi,g,\lambda})\neq 8$ or $|n-d|\neq 6$, then there exists a set of valuations $\Lambda_{p}\subset\Lambda_{0,p}$ of density $1$, depending only on $h,f,g,\chi$ and $p$, such that
  \[\left(\Gc_{h,f,\chi,g,\lambda}\right)_{\lambda\in\Lambda_p}\] is a coherent family, with monodromy group structure $G=\Sp_N$.
\end{proposition}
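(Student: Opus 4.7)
The plan is to follow the same template as the proofs of Propositions \ref{prop:expsumGen1} and \ref{prop:expsumGen2}, replacing the Katz references by the corresponding results on exponential sums with both multiplicative and additive characters, and exploiting the fact that symplectic autoduality makes the twist $\alpha_p$ unnecessary.

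First, I would construct the sheaf $\Gc_{h,f,\chi,g}$ as an $\ell$-adic Fourier transform. Starting from the Kummer sheaf $\Lc_{\chi(g)}$ twisted by the Artin--Schreier sheaf $\Lc_{\psi(h)}$ on the open set where $g$ is invertible, one pushes forward along $f$ (using the hypothesis $(d,n)=1$ to guarantee good behaviour at infinity) and then applies $\FT_\psi$. The resulting sheaf has the prescribed trace function by the Grothendieck--Lefschetz trace formula, and the construction goes through verbatim for sheaves of $\Oc_\lambda$-modules (see \cite[Chapter 5]{KatzGKM}). Compatibility of the family $(\Gc_{h,f,\chi,g})_{\lambda\in\Lambda_{0,p}}$ and the uniform conductor bound (Condition \ref{item:coherentFamily3} of Definition \ref{def:coherentFamily}) follow from Lemma \ref{lemma:compSysFT} applied to the underlying compatible system.

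Second, I would check that $G_\geom(\Gc_{h,f,\chi,g}\otimes\overline\Q_\ell)=\Sp_N(\overline\Q_\ell)$ under the stated hypotheses. The key input is the oddness of $f$ and $g$ together with the existence of an odd $L\in\Q(X)$ with $L(x)^r=g(x)g(-x)$: this yields (after the standard Fourier duality argument) a symplectic autoduality on $\Gc_{h,f,\chi,g}$, so that the \emph{arithmetic} monodromy group is already contained in $\Sp_N(\overline\Q_\ell)$, as in the $(H_2)$ case of Proposition \ref{prop:expsumGen1}. The condition $N\neq 7,8$ or $|n-d|\neq 6$ is exactly what is needed in Katz's classification (via the results of \cite[Chapter 7]{KatzESDE} based on the Kazhdan--Margulis style analysis of local monodromies) to rule out the exceptional subgroups $G_2\hookrightarrow\Sp_7$ and the related exceptional cases which could otherwise obstruct the geometric monodromy from filling the whole symplectic group. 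Combining the inclusions gives $G_\geom=G_\arith=\Sp_N$ over $\overline\Q_\ell$, with \emph{no} need of a twist $\alpha_p$.

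Third, with the $\overline\Q_\ell$-monodromy identified, I would invoke Theorem \ref{thm:monodromyLP} directly: since $\Sp_N$ is in the allowed list $\{\SL_n,\Sp_{2n}\}$, the compatible system $(\Gc_{h,f,\chi,g})_{\lambda\in\Lambda_{0,p}}$ admits a subset $\Lambda_p\subset\Lambda_{0,p}$ of natural density $1$, depending only on $h,f,\chi,g$ and $p$, on which the integral arithmetic and geometric monodromy groups both coincide with $\Sp_N(\F_\lambda)$. This gives precisely the coherent family demanded by the statement.

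The main obstacle is the second step: verifying that under the combinatorial conditions on $h,f,\chi,g,L$, the geometric monodromy over $\overline\Q_\ell$ really is the full $\Sp_N$ and not one of the exceptional subgroups. This ultimately reduces to a careful bookkeeping with Katz's classification of the possible monodromy groups of Fourier transforms of Kummer-twisted sheaves, which is why the hypotheses on $r$, on the pole/zero orders of $g$, and on $N,|n-d|$ all appear in the statement. Once this is done, the Larsen--Pink input (Theorem \ref{thm:monodromyLP}) upgrades the geometric information over $\overline\Q_\ell$ to the integral statement on a density-one set of valuations with no further work.
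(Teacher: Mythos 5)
Your proposal is correct and follows essentially the same route as the paper: construct the sheaves via Katz's Fourier-transform machinery \cite[7.7]{KatzESDE}, get compatibility and the conductor bound from Lemma \ref{lemma:compSysFT}, use \cite[7.13 ($\Sp$-example(2))]{KatzESDE} to see that the arithmetic monodromy lies in $\Sp_N(\overline\Q_\ell)$ (so no twist $\alpha_p$ is needed) and that the geometric monodromy fills $\Sp_N$ under the stated hypotheses, and then apply Theorem \ref{thm:monodromyLP} to obtain the density-one set $\Lambda_p$. The only blemish is your aside identifying the excluded exceptional case as ``$G_2\hookrightarrow\Sp_7$'' --- the $7$-dimensional representation of $G_2$ is orthogonal, not symplectic, and the excluded low-rank cases in Katz's classification are of a different nature --- but since you defer the actual exclusion to Katz's classification, this does not affect the argument.
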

\begin{proof}
  This is again similar to the proof of Proposition \ref{prop:expsumGen1}. See \cite[7.7, 7.13 ($\Sp$-example(2))]{KatzESDE} for the construction of the sheaves and the determination of the monodromy groups over $\C$; \cite[7.13]{KatzESDE} shows that the arithmetic monodromy group is itself contained in $\Sp_{N}$.
\end{proof}
\begin{remark}
  If $L$ as in the statement of Proposition \ref{prop:expsumGen3} is odd, there exists $\alpha_p\in\{\pm 1\}$ such that the arithmetic and geometric monodromy groups over $\C$ of $\alpha_p\otimes\Gc_{h,f,\chi,g,\lambda}$ coincide and are conjugate to $\SO_N(\C)$ (see \cite[7.14 ($\O$-example(2))]{KatzESDE}). However, Theorem \ref{thm:monodromyLP} does not apply in that case (see Remarks \ref{rem:LP} \ref{rem:LPSO}).
\end{remark}

\subsubsection{Zero-density estimates}

Hence, for the three families above, we get by Corollary \ref{cor:largeSieveExplicitQzeta4p} with Propositions \ref{prop:powersFiniteIndex} and \ref{prop:definableSubsets}:
\begin{proposition}\label{prop:largeSieveExpSums}
  We fix a prime $p$ and we set $q=p^e$. Let $t: \F_q\to \Q(\zeta_{4p})$ be the trace function associated with one of the families from Propositions \ref{prop:expsumGen1}, \ref{prop:expsumGen2} or \ref{prop:expsumGen3}, and let $B$ be as in \eqref{eq:BGeneral}.

  For $\varphi(x)$ a first-order formula in the language of rings as in Proposition \ref{prop:definableSubsets},
  \[P\big(t(x)\in \varphi(\Q(\zeta_{4p}))\big)\ll_{p,f,\varphi} \frac{\log{q}}{B q^{\frac{1}{2B}}}\to 0 \ (e\to+\infty).\]
  In particular, for almost all monic $f\in\Z[X]$ of fixed degree $\ge 2$ (such as $f(X)=X^m$ for $m\ge 2$ coprime to $p$),
  \[P\big(t(x)\in f(\Q(\zeta_{4p}))\big)\ll_{p,f} \frac{\log{q}}{B q^{\frac{1}{2B}}}\to 0 \ (e\to+\infty).\]
\end{proposition}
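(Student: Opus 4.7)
The plan is to combine the constructions of coherent families in Propositions \ref{prop:expsumGen1}, \ref{prop:expsumGen2}, \ref{prop:expsumGen3} with the large-sieve bound of Corollary \ref{cor:largeSieveExplicitQzeta4p}, supplemented by the local density inputs of Section \ref{subsec:localDensities}. All three cited propositions produce, for $p$ fixed and large enough, a coherent family of $\Oc'_\lambda$-modules over $\F_p$ indexed by a density-$1$ subfamily $\Lambda_p\subset\Lambda_{0,p}$ of $\ell$-adic valuations on the ring of integers $\Oc'$ of $E'=E(\alpha_p)$, with monodromy group structure $G\in\{\SL_N,\Sp_N\}$ as listed. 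Since $\alpha_p\in\overline\Q$, the field $E'$ is still a (relative) cyclotomic-type extension over which we have a Chebotarev-style lower bound on the number of split primes of bounded norm, so the range of applicability of Corollary \ref{cor:largeSieveExplicitQzeta4p} is unchanged up to constants depending on $p$.

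First I would verify the local-density hypothesis \eqref{eq:localDensities} for $A=\varphi(\Q(\zeta_{4p}))$. By Remark \ref{rem:definableSubsets} and Proposition \ref{prop:definableSubsets}, as soon as $\car(\F_\lambda)$ is large enough in terms of $\varphi$, the reduction $A_\lambda$ is contained in $\varphi(\F_\lambda)$, and Theorem \ref{thm:CDM} gives $\limsup_{|\F_\lambda|\to+\infty} |A_\lambda|/|\F_\lambda|\le\max C(\varphi)<1$. Restricting $\Lambda_p$ to the cofinite subset of valuations lying above primes $\ell\gg_{\varphi,p} 1$ preserves its density $1$ status and ensures \eqref{eq:localDensities}. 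The special case $A=f(\Q(\zeta_{4p}))$ for $f\in\Z[X]$ of fixed degree is then handled via Example \ref{ex:definablePolynomials}: for almost all such $f$, the formula $\varphi(x)=(\exists y: f(y)=x)$ satisfies the hypotheses of Proposition \ref{prop:definableSubsets}. The case $f(X)=X^m$ with $(m,p)=1$ is covered directly by Proposition \ref{prop:powersFiniteIndex} after passing to the subfamily of $\lambda$ with $|\F_\lambda|\equiv 1\pmod m$, which still has positive density inside $\Lambda_p$ by Chebotarev.

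Next I apply Corollary \ref{cor:largeSieveExplicitQzeta4p} with the coherent family $(\Gc\otimes\alpha_p\Oc'_\lambda)_{\lambda\in\Lambda_p}$, exponent $B$ given by \eqref{eq:BGeneral} for the relevant $G$, and $A$ as above. The trace function of the twisted coherent family differs from the original exponential sum \eqref{eq:expsumGen} by multiplication by the scalar $\alpha_p\in E'$; since $\alpha_p\Q(\zeta_{4p})=\Q(\zeta_{4p})(\alpha_p)$ and membership of $t(x)$ in a $\sigma$-stable subset is invariant under such a twist up to replacing $\varphi$ (resp.\ $f$) by an appropriate transform with the same structural properties, the density-$1$ conclusion of Corollary \ref{cor:largeSieveExplicitQzeta4p} transfers to the untwisted exponential sums with an implicit constant absorbing $\alpha_p$ (hence $p$). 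This yields exactly the stated bound $\ll_{p,\varphi}\log q/(Bq^{1/(2B)})$, which tends to $0$ as $e\to+\infty$ with $p$ fixed.

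The main obstacle, as the excerpt already flags, is bookkeeping the dependence on $p$: the subfamily $\Lambda_p$, the twist $\alpha_p$, and the field $E'=E(\alpha_p)$ all depend on $p$ through Theorem \ref{thm:monodromyLP}, so the implicit constant in Corollary \ref{cor:largeSieveExplicitQzeta4p}—and in particular the Chebotarev/prime-ideal counts bounding $|(\Lambda_p)_L|$ from below—must be tracked to absorb them into a single $\ll_{p,\varphi}$ (resp.\ $\ll_{p,f}$). Once $p$ is fixed, however, $E'$ is a fixed number field and $\Lambda_p$ has density $1$ inside $\Lambda_{0,p}$, so Proposition \ref{prop:chebotarevCyclotomic} (or rather its variant over $E'$) gives $|(\Lambda_p)_L|\gg_p L/\log L$, which with $L=\lfloor q^{1/(2B)}\rfloor$ produces the claimed estimate.
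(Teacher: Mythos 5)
Your overall skeleton (coherent family from Propositions \ref{prop:expsumGen1}--\ref{prop:expsumGen3} + local densities from Propositions \ref{prop:powersFiniteIndex}/\ref{prop:definableSubsets} + the sieve bound, with a $p$-dependent Chebotarev count since $p$ is fixed) is the paper's argument, and in the symplectic cases ($\alpha_p=1$) your proof is complete. The genuine gap is in the special linear cases, i.e.\ in how you transfer the bound from the twisted trace function back to the original sum. First, the twisted family $\Gc\otimes\alpha_p\Oc'_\lambda$ has trace function $t'(x)=\alpha_p^{\,e}t(x)$ over $\F_q=\F_{p^e}$, not $\alpha_p t(x)$: the scalar varies with $e$, so it cannot simply be ``absorbed into an implicit constant depending on $p$''. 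Second, your mechanism --- ``replacing $\varphi$ (resp.\ $f$) by an appropriate transform with the same structural properties'' --- does not work as stated: the relevant set is $\alpha_p^{\,e}\varphi(\Q(\zeta_{4p}))$ (e.g.\ $\alpha_p^{\,e}\cdot E^m$), which is in general not the image of a polynomial in $\Z[X]$ nor defined by a parameter-free formula, and any formula describing it carries a parameter $\alpha_p^{\,e}$ that changes with $e$, so Propositions \ref{prop:powersFiniteIndex} and \ref{prop:definableSubsets} do not apply to it verbatim and the uniformity in $e$ you need is never addressed. (Also $\alpha_p\Q(\zeta_{4p})$ is a coset of a $\Q(\zeta_{4p})$-line, not the field $\Q(\zeta_{4p})(\alpha_p)$, and the appeal to a ``$\sigma$-stable subset'' has no content here.)

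The correct repair is the paper's: go back to Proposition \ref{prop:largeSieve}, whose conclusion depends on the target set only through the local-density condition \eqref{eq:localDensities}, with an implicit constant uniform in that data. Since $\alpha_p$ is a $\lambda$-unit for the valuations in play, multiplication by $\alpha_p^{\pm e}$ is a bijection of $\F_{\lambda}$, so the sets $\alpha_p^{-e}A$ (for all $e$ simultaneously) have exactly the same local densities as $A$, and \eqref{eq:localDensities} for $A$ (which you did verify via Propositions \ref{prop:powersFiniteIndex}/\ref{prop:definableSubsets}) yields \eqref{eq:localDensities} for the twisted sets uniformly in $e$; one then applies the sieve to $t'$ and $\alpha_p^{e}A$, exactly as in Section \ref{subsubsec:unnormalized}. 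A further minor point: $E'=E(\alpha_p)$ need not be cyclotomic, so Corollary \ref{cor:largeSieveExplicitQzeta4p} does not literally apply over $E'$; your closing remark (fixed number field, plain Chebotarev as in \eqref{eq:boundLargeSieveCheb} or \eqref{eq:largeSieveLP}, constant depending on $p$) is the right way to state it, and the earlier ``cyclotomic-type extension'' claim should be dropped.
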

\begin{proof}
  In the symplectic case, this is immediate. In the special linear cases, we get the result for the twisted trace function $t': \F_q\to\Oc_\lambda'$, $t'(x)=\alpha_p^et(x)$. The result for the unnormalized function is obtained as in Section \ref{subsubsec:unnormalized}, replacing $A$ by $\alpha_p^{-e}A$ in Proposition \ref{prop:largeSieve} and using uniformity.
\end{proof}
\begin{remark}
  In the special linear case, the implied constant depends on $p$ both because of the use of Theorem \ref{thm:monodromyLP}, and because of the twisting factor $\alpha_p$.
\end{remark}

\subsubsection{Galois actions}

Note that for the sums
\[\frac{-1}{\sqrt{q}}\sum_{y\in\F_q} e\left(\frac{\tr(xf(y)+h(y))}{p}\right)\chi(g(y)) \hspace{0.2cm} (x\in\F_q^\times)\]
with $h(Y)=Y^m$ and $f(Y)=Y^n$ $(m,n\in\Z)$, we have $\sigma_{c^m}(t(x)))=t(c^{m-n}x)$, where $\sigma_{c^m}\in\Gal(\Q(\zeta_p)/\Q)\cong\F_p^\times$ corresponds to $c^m$ for some $c\in\F_p^\times$. Hence, as in Section \ref{subsubsec:GaloisKl}, it makes sense to study the \textit{integer}
\[\frac{|\{x\in\F_q^\times : t(x)\in \varphi(\Q(\zeta_{4p}))\}|(p-1,m-n)}{p-1}\ll_{m,n}\frac{|\{x\in\F_q^\times : t(x)\in \varphi(\Q(\zeta_{4p}))\}|}{p-1}\]
when $\varphi(x)$ is a first-order formula in the language of rings. However, doing so requires an estimate of the form \eqref{eq:density} uniform in $p$, for example through a more precise knowledge of the integral monodromy instead of relying on Theorem \ref{thm:monodromyLP}.

\subsection{Hypergeometric sums}

The same methods also apply to the hypergeometric sums defined by Katz \cite[Chapter 8]{KatzESDE}, generalizing Kloosterman sums: under some conditions, the arithmetic and geometric monodromy groups over $\overline\Q_\ell$ coincide and are conjugate to $\SL_n$, without needing to twist (see the references to \cite{KatzESDE} in \cite[Proposition 7.7]{PGGaussDistr16}).

\bibliographystyle{alpha}
\small
\bibliography{references}
\normalsize

\end{document}